\documentclass[pdflatex,sn-mathphys-num]{sn-jnl}
\usepackage{graphicx}
\usepackage{amsmath,amssymb,amsfonts,amsthm}
\usepackage{xcolor,textcomp,booktabs}
\usepackage{algorithm,algorithmic}
\usepackage[title]{appendix}
\theoremstyle{thmstyleone}
\newtheorem{theorem}{Theorem}
\newtheorem{lemma}{Lemma}
\newtheorem{proposition}{Proposition}

\theoremstyle{thmstyletwo}

\theoremstyle{thmstylethree}

\hypersetup{colorlinks=true,linkcolor=blue,citecolor=blue,urlcolor=blue,
 pdftitle={Interval-Constrained Brownian Paths: Exact Interpolation and Extrapolation},
 pdfauthor={Radu Herbei and Kumar Somnath}}

\AtBeginDocument{\allowdisplaybreaks[0]}
\newcommand{\cL}{{\cal L}}
\newcommand{\cF}{{\cal F}}

\newcommand{\sK}{\mathsf{K}}

\newcommand{\sR}{\mathsf{R}}
\newcommand{\sU}{\mathsf{U}}

\newcommand{\EE}{\mathbb {E}}
\newcommand{\PP}{\mathbb {P}}
\newcommand{\RR}{\mathbb {R}}

\newcommand{\Cov}{\operatorname{Cov}}

\newcommand{\bfone}{\mathbf{1}}

\newcommand{\eqd}{\stackrel{\cL}{=}}

\newcommand{\lerh}{\overset{\rm series}{\le}}

\numberwithin{equation}{section}

\begin{document}
\title[Constrained Brownian Interpolation and Extrapolation]{Interval-Constrained Brownian Paths: Exact Interpolation and Extrapolation}
\author*[1]{\fnm{Radu} \sur{Herbei}}\email{herbei.1@osu.edu}
\author[2]{\fnm{Kumar} \sur{Somnath}}\email{kumarsomnath.iitkgp@gmail.com}
\affil*[1]{\orgdiv{Department of Statistics}, \orgname{The Ohio State University}, \orgaddress{\street{1958 Neil Ave.}, \city{Columbus}, \postcode{43210}, \state{OH}, \country{USA}}}
\affil[2]{\orgname{Foursquare Labs, Inc.}, \orgaddress{\city{Seattle}, \state{WA}, \country{USA}}}
\abstract{We study Brownian motion and Brownian bridge processes conditioned to remain in a fixed interval $[0,a]$, focusing on the conditional distribution at a single time. For a Brownian motion in $[0,a]$, conditioned on survival up to time $t$ we recall (and present in a self-contained form) the conditional density of its position at $t$ (exact extrapolation). For a Brownian bridge conditioned to remain in $[0,a]$ on $[0,T]$ we express the probability density at an interior time as a normalized product of killed transition densities (exact interpolation). These densities admit dual complementary series representations, which are linked via the Jacobi theta identity. 
Our main contribution is a unified exact sampling suite for both extrapolation and interpolation that (i) includes boundary endpoints and (ii) automatically switches between density representations to keep acceptance rates efficient across regimes. To that end, we derive simple proposal families which cover both small- and large-time regimes, with an automatic rule selecting the tighter envelope. Our procedures extend to exact simulation of discrete skeleton paths at multiple times via the Markov property.}
\keywords{Brownian bridge; Brownian meander; exact simulation; killed transition densities}
\pacs[MSC Classification]{60J65, 60J60, 65C05}
\maketitle
\vspace{-24pt}
{\footnotesize\noindent\textbf{Acknowledgements.} We thank the Editors and the Reviewers for their constructive remarks.\par}
\clearpage

\section{Introduction}
\label{sec:intro}

Brownian motion and Brownian bridges under hard constraints form a classical but still practically vital corner of stochastic analysis. This work focuses on the conditional distributions at a single time in two basic situations: a Brownian motion started inside a bounded interval and conditioned to remain there up to a fixed horizon, and a Brownian bridge with prescribed endpoints conditioned to remain inside the same interval. Our stated goal is to develop exact sampling algorithms for these two conditional distributions. 
Their analysis reaches back to the method of images and to spectral expansions for the heat equation with Dirichlet boundary conditions, and modern treatments place them within the broader frameworks of excursion theory, meanders, and killed semigroups; see, among others,  \cite{ito2012diffusion,revuz2013continuous,borodin2012handbook}. The appeal of these conditional laws is both conceptual and practical. Even when interest centers on a single observation time, access to the correct conditional law, rather than a discretization surrogate, can be critical
for likelihood evaluation, data augmentation, and proposal design in Monte Carlo schemes used downstream.

The probability laws studied in this paper admit two complementary representations, both playing a central role. The first uses the reflection perspective to express constrained transition probabilities as rapidly convergent sums. The second uses the eigen-expansion of the Dirichlet heat kernel, yielding sine-series representations that naturally expose the relevant time and length scales. These series are linked, through Jacobi theta identities \cite{NISTHandbook2010}, to well-studied special functions and to distributional facts about meanders, excursions, and bridges; see \cite{revuz2013continuous,borodin2012handbook}. In particular, the law of a Brownian bridge constrained by its supremum relates to the Kolmogorov distribution that underpins the classical goodness-of-fit statistic \cite{Kolmogorov1933,Smirnov1939,Darling1957}, while the meander provides a canonical model for survival from a boundary and admits explicit descriptions of its marginals \cite{durrett1977functionals,Imhof:1984:DFB,BianeYor2008}. For interpolation inside a constrained bridge, a standard factorization expresses the marginal at an interior time as a product, properly normalized, of two killed heat kernels evolving from the left and right endpoints, a perspective that is both probabilistic and analytic.

Although these survival-conditioned probability laws admit classical series representations, turning them into exact samplers is nontrivial. We combine the reflection and spectral perspectives and build exact simulation algorithms based on Devroye's \emph{series method}, which provides an exact accept--reject framework whenever the target density admits alternating or absolutely convergent series with computable monotone bounds \cite{devroye1981series, devroye1986book}. Related exact simulation frameworks for more general diffusions and diffusion bridges include the Exact Algorithm and retrospective exact simulation \cite{EA1,EA2}. We develop simple proposal families whose role is to upper-bound the unnormalized conditional densities across all regimes. The choice between proposals is automatic and based on a small number of scale-free summaries that signal which representation delivers the tighter envelope in the parameter region at hand. The series method ensures that every draw terminates after a finite and data-dependent number of terms while preserving exactness. The algorithms we develop cover interior and boundary endpoints in a unified way, and naturally scale to joint sampling at several time-points by iterating the one-time step and exploiting the Markov property. Throughout, we emphasize proposals and comparisons that lead to code of minimal complexity, with choices that reflect practical constraints faced by users who need dependable samplers rather than delicate, regime-specific implementations.

The manuscript is organized as follows. In Section \ref{sec:prel} we introduce notation and some preliminary facts. Section \ref{sec:sampling} develops and illustrates the exact sampling algorithms, both for a constrained Brownian motion (extrapolation) and a constrained Brownian bridge (interpolation). This section ends with a few computational remarks. Our concluding remarks are in Section \ref{sec:concl}. Detailed proofs of our results are included in the Appendix.

\section{Preliminaries}
\label{sec:prel}

Throughout this paper $a> 0$ is a fixed constant and $x,z\in [0,a]$ are  fixed points. We use $\eqd$ to denote ``equal in distribution''. The arrows $\nearrow$ and $\searrow$ indicate monotone convergence, increasing and decreasing respectively. The symbols $\sU, \sU', \sU'', \ldots$ are used to denote iid ${\rm Uniform}(0,1)$ variates. Let $\{W_t,\ 0\le t\le T\}$ be a standard Brownian motion process on $[0,T]$, defined on some probability space $(\Omega, \cF, \PP)$. The process $\{W^x_t, 0\le t\le T\} = \{x + W_t, 0\le t\le T\}$ is called a Brownian motion started at $x$. When we ``force'' this process to end at the fixed point $z$, we obtain what is known as a Brownian bridge from $x$ to $z$, which we will denote as $\{W^{x\to z}_t, 0\le t\le T\}$. Formally, the bridge can be defined as
$$
W^{x\to z}_t = z\frac{t}{T} + \left(W^x_t - \frac{t}{T}W^x_T\right),\qquad 0\le t\le T\ ,
$$
or, succinctly, we say $\{W^{x\to z}_t, 0\le t\le T\}\eqd \{W^x_t, 0\le t\le T\}\mid [W^x_T = z]$. Details on the careful definition of these processes can be found in standard textbooks, for example \cite{revuz2013continuous}. Consider the following two events
\begin{align*}
A_u(t) &= \left[0\le \inf_{0\le s\le t}W^{u}_s \le \sup_{0\le s\le t} W^{u}_s \le a \right]\ ;\\
B &= \left[0\le \inf_{0\le s\le T}W^{x\to z}_s \le \sup_{0\le s\le T} W^{x\to z}_s \le a \right]\ . 
\end{align*}

We note that $A_x(t)$ and $B$ are written using the compact interval $[0,a]$ and we allow $x,z\in[0,a]$.
Let $D=(0,a)$ and define the exit time
$
\tau := \inf\{s\ge 0:\, W_s^x\notin D\}=\inf\{s\ge 0:\, W_s^x\in\{0,a\}\}.
$
When $x,z\in(0,a)$, the events $A_x(t)$ and $\{\tau>t\}$ (and similarly $B$ and $\{\tau>T\}$ for the
bridge) differ only on the event that the path touches $\{0,a\}$ without ever leaving $[0,a]$, which has
probability $0$. Hence, for interior endpoints we may work on the open interval $(0,a)$ without loss of
generality (as we do below). We treat the cases $x,z\in\{0,a\}$ separately, by limits from the interior; see Section~\ref{sec:sampling}.

The event $A_x(t)$ states that $\{W_s^x, 0\le s\le t\}$ has not left the interval $[0,a]$, while $B$ states that the Brownian bridge $\{W_s^{x\to z}\}$ has not left the interval $[0,a]$. The goal of this paper is two-fold: (i) develop an exact sampling algorithm for the variable $W_t^x$ conditionally on $A_x(t)$ and (ii) develop an exact sampling algorithm for $W_t^{x\to z}$ conditionally on $B$. The challenge stems from the fact that both conditional distributions admit densities (with respect to the Lebesgue measure) which are represented as infinite series, and thus cannot be ``handled'' in the usual way. Expressions for the density 
$
\PP\big(W_t^x \in dy \mid A_x(t)\big)
$
for $t>0$ and $y\in (0,a)$ already exist in the literature, or can be derived using standard techniques, as we show later in Section \ref{sec:sampling}. 
Once this density is available, the Markov nature of the processes under study allows us to write the un-normalized expression for
$\PP(W_t^{x\to z} \in dy \mid B)$, as given in the next result.

\begin{proposition}
\label{prop:markov}
Given $a>0$, $t\in (0,T)$ and $x,y,z\in (0,a)$ we have the following density factorization:
\begin{equation}
    \frac{\PP(W_t^{x\to z}\in dy \mid B)}{dy} \propto \frac{\PP(W_t^x\in dy\mid A_x(t))}{dy}\times \frac{\PP(W_{T-t}^z\in dy\mid A_z(T-t))}{dy}\ .
    \label{eq:factorization}
\end{equation}
\end{proposition}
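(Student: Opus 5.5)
We plan to work with the killed transition density $p^D_s(u,v)$ of Brownian motion on $D=(0,a)$, defined by $\PP(W^u_s\in dv,\ \tau>s)=p^D_s(u,v)\,dv$ for $u,v\in D$. By the remark preceding the statement, for interior points we may replace $A_u(s)$ by $\{\tau>s\}$ without changing any probability. The proof then has three steps: express the bridge marginal through $p^D$, factor it using the Markov property, and translate each factor into the conditional densities appearing in \eqref{eq:factorization}.

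\textbf{Step 1 (bridge marginal as a ratio).} Since the bridge is Brownian motion conditioned on $W^x_T=z$, we write
\[
\frac{\PP(W^{x\to z}_t\in dy\mid B)}{dy}=\frac{\PP(W^x_t\in dy,\ \tau>T,\ W^x_T\in dz)}{dy\,\PP(\tau>T,\ W^x_T\in dz)} .
\]
We make this rigorous in the standard way. Take $z$-windows $(z-\epsilon,z+\epsilon)$, divide numerator and denominator by $2\epsilon$, and let $\epsilon\to0$. Continuity of $p^D_s(\cdot,\cdot)$ in its spatial arguments justifies the limit. Alternatively, we can invoke the regular conditional law of the bridge given in \cite{revuz2013continuous}.

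\textbf{Step 2 (Markov property at time $t$).} We split the event $\{\tau>T\}$ into survival on $[0,t]$ and survival on $[t,T]$. Applying the simple Markov property at the deterministic time $t$ gives the numerator as
\[
p^D_t(x,y)\,p^D_{T-t}(y,z)\,dy\,dz .
\]
The denominator equals $p^D_T(x,z)\,dz$, which depends only on $x,z,T$ and is therefore a normalizing constant in $y$.

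\textbf{Step 3 (symmetry and conversion to conditional densities).} The killed kernel is symmetric, $p^D_{T-t}(y,z)=p^D_{T-t}(z,y)$. This follows from the self-adjointness of the Dirichlet Laplacian; concretely, one can read it off the sine-series expansion $\frac{2}{a}\sum_n e^{-n^2\pi^2 s/(2a^2)}\sin(n\pi u/a)\sin(n\pi v/a)$, or obtain it from the time-reversal invariance of Brownian paths. Next, by definition of conditional probability,
\[
\frac{\PP(W^u_s\in dv\mid A_u(s))}{dv}=\frac{p^D_s(u,v)}{\PP(\tau>s)} ,
\]
and the denominator $\PP(\tau>s)$ depends only on $u$ and $s$. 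Substituting these into the Step 2 expression, the numerator becomes, up to factors free of $y$,
\[
\frac{\PP(W_t^x\in dy\mid A_x(t))}{dy}\cdot\frac{\PP(W_{T-t}^z\in dy\mid A_z(T-t))}{dy} .
\]
The constants $\PP(\tau_x>t)$, $\PP(\tau_z>T-t)$ and $p^D_T(x,z)$ are all absorbed into the proportionality sign, which yields \eqref{eq:factorization}.

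\textbf{Main obstacle.} The only delicate point is Step 1: justifying the conditioning on the null event $W^x_T=z$ jointly with the survival event $B$. We would handle it by the $\epsilon$-window limit described there, using joint continuity and strict positivity of $p^D$ on $D\times D$; positivity ensures the denominator $p^D_T(x,z)$ is nonzero so the ratio is well defined. The symmetry in Step 3 is standard and we would simply cite it.
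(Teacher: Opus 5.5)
Your proposal is correct and follows essentially the same route as the paper's proof. Both apply the Markov property at time $t$ to get the killed joint density $p^D_t(x,y)\,p^D_{T-t}(y,z)$, divide by $p^D_T(x,z)$, use the symmetry of the killed kernel, and absorb the survival probabilities into the proportionality constant. Your $\epsilon$-window justification for conditioning on $W^x_T=z$, together with the positivity of $p^D_T(x,z)$, is extra care that the paper skips: it simply divides formally by $\PP_x(W_T\in dz,\tau>T)$.
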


\begin{proof}
    See Appendix \ref{proof:markov}, also \cite{ito2012diffusion,borodin2012handbook,revuz2013continuous}.
\end{proof}

Before proceeding, we present the following technical result, which gives a set of tight bounds on two absolutely convergent series. These series will appear later on, in the expressions defining the densities of interest.
\begin{proposition}
\label{prop:h1}
Fix $\alpha, \beta>0$ and consider the following absolutely converging series
$$
S^{(1)}(\alpha) = \sum_{n=1}^\infty n^2\exp\{-\alpha n^2\} <\infty\ ,
\qquad
S^{(2)}(\beta) = \sum_{n= 1}^\infty \exp\{-\beta n^2\} < \infty\ .
$$
For $N\ge 1$, define the sequences of partial sums and the remainder terms as
\begin{align*}
S^{(1)}_N(\alpha) &= \sum_{n=1}^{N} n^2\exp\{-\alpha n^2\}, &
R^{(1)}_{N}(\alpha) &= \sum_{n = N+1}^\infty n^2\exp\{-\alpha n^2\},\\
S^{(2)}_N(\beta) &= \sum_{n=1}^{N} \exp\{-\beta n^2\}, &
R^{(2)}_{N}(\beta) &= \sum_{n = N+1}^\infty \exp\{-\beta n^2\}.
\end{align*}
Assume $\rho\in(0,1)$ is given. Then, for
$$
N > N_0(\alpha, \beta, \rho) \stackrel{\rm def}{=} \max\left\{ \frac{1}{2}\left(\frac{1}{\alpha}\log\left(\frac{4}{\rho}\right)-1\right)\ ,\ \frac{1}{2}\left(\frac{1}{\beta}\log\left(\frac{1}{\rho}\right)-1\right) \right\}
$$
the remainder terms are bounded as follows,
\begin{align*}
    R^{(1)}_{N}(\alpha) &\le x^{(1)}_N(\alpha) \stackrel{\rm def}{=} \frac{(N+1)^2\exp\{-\alpha (N+1)^2\}}{(1-\rho)};\\
    R^{(2)}_{N}(\beta) &\le x^{(2)}_N(\beta) \stackrel{\rm def}{=} \frac{\exp\{-\beta (N+1)^2\}}{(1-\rho)};
\end{align*}
and, the following limits hold as $N\to \infty$:
\begin{align*}
    S^{(1)}_N(\alpha) \nearrow S^{(1)}(\alpha)
    \qquad\mbox{ and }\qquad
    S^{(1)}_N(\alpha) + x^{(1)}_N(\alpha) \searrow S^{(1)}(\alpha) \ ,\\
    S^{(2)}_N(\beta) \nearrow S^{(2)}(\beta)
    \qquad\mbox{ and }\qquad
    S^{(2)}_N(\beta) + x^{(2)}_N(\beta) \searrow S^{(2)}(\beta)\ .
\end{align*} 
\end{proposition}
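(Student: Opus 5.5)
We will bound each remainder by a geometric series, using ratios of consecutive terms. Write $a_n = n^2 e^{-\alpha n^2}$ and $b_n = e^{-\beta n^2}$. The plan is to show that for every $n\ge N+1$ (with $N>N_0$) the ratios satisfy $a_{n+1}/a_n\le\rho$ and $b_{n+1}/b_n\le\rho$. Summing then gives
\[
R^{(1)}_N(\alpha)\le a_{N+1}\sum_{k\ge 0}\rho^k = \frac{a_{N+1}}{1-\rho}=x^{(1)}_N(\alpha),
\]
and the analogous bound $R^{(2)}_N(\beta)\le x^{(2)}_N(\beta)$.

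For the second series the ratio is $b_{n+1}/b_n=e^{-\beta(2n+1)}$. This is at most $\rho$ iff $2n+1\ge \beta^{-1}\log(1/\rho)$, i.e.\ $n\ge \frac12(\beta^{-1}\log(1/\rho)-1)$. That holds for $n\ge N+1>N_0$ by the second entry of the maximum.

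For the first series the ratio is
\[
\frac{a_{n+1}}{a_n}=\Bigl(1+\tfrac1n\Bigr)^2 e^{-\alpha(2n+1)}\le 4\,e^{-\alpha(2n+1)},
\]
since $n\ge1$. This is at most $\rho$ once $2n+1\ge\alpha^{-1}\log(4/\rho)$, which explains the factor $4$ in the first entry of $N_0$. This step is the only place needing care: the crude bound $(1+1/n)^2\le 4$ is exactly what matches the stated threshold, so no sharper estimate is required. We also note that the bound is applied for $n\ge N+1$, which lies above the threshold whenever $N>N_0$. (Monotonicity of the ratio in $n$ is not even needed, since each ratio is checked directly.)

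Finally, the limit statements. The partial sums $S^{(i)}_N$ increase to $S^{(i)}$ because the terms are positive and the series converge. For the upper bounds we need two facts. First, $x^{(i)}_N\to0$, since it is a constant multiple of a term of a convergent series; hence $S^{(i)}_N+x^{(i)}_N\to S^{(i)}$. Second, the sequence is monotone: the difference
\[
(S_{N+1}+x_{N+1})-(S_N+x_N)=c_{N+1}+\frac{c_{N+2}-c_{N+1}}{1-\rho}
\]
(with $c=a$ or $b$) is $\le0$ iff $c_{N+2}\le\rho\,c_{N+1}$. This is exactly the ratio bound established above for indices $\ge N+1$. So the sequence $S^{(i)}_N+x^{(i)}_N$ decreases for $N>N_0$, and it always dominates $S^{(i)}$ by the remainder bound, completing the proof.
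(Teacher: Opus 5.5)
Your proposal is correct and follows essentially the same route as the paper: you dominate each tail by a geometric series using the consecutive-term ratio bound, where $(1+1/n)^2\le 4$ gives the $\log(4/\rho)$ threshold. You then reduce monotonicity of $S_N+x_N$ to $c_{N+2}\le\rho\,c_{N+1}$, exactly as the appendix does. The only difference is that you treat the second series explicitly, whereas the paper handles it by a ``line-by-line'' analogy.
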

\begin{proof}
    See Appendix \ref{proof:h1}.
\end{proof}

Above, the fixed value $\rho$ can be viewed as a tuning knob. Note that $S^{(1)}_N$ and $S^{(2)}_N$ are decreasing in $\rho$, since $N$ is decreasing with $\rho$ while $x^{(1)}_N$ and $x^{(2)}_N$ are increasing in $\rho$. Thus, in order to achieve tight bounds for $S^{(1)}$ and $S^{(2)}$, one would need to balance the two components. In our simulations we find that an ``average'' value $\rho=1/2$ is sufficient. 
Also, we note that it is possible that $N_0(\alpha, \beta, 1/2) < 0$, for some values of $\alpha, \beta>0$. To avoid this, we define 
\begin{equation}
\label{eq:N0}
N_0 = N_0(\alpha,\beta) = 
\max\left\{\left\lceil N_0(\alpha,\beta, 1/2)\right\rceil+1\ ,\ 1\right\}
\end{equation}
where $\lceil\cdot\rceil$ is the $\mathtt{ceiling}$ function. Then, Proposition \ref{prop:h1} with $\rho=1/2$ gives that
\begin{align}
\sum_{n=1}^{N_0+1} n^2\exp\{-\alpha n^2\} &< S^{(1)}(\alpha) < \sum_{n=1}^{N_0+1} n^2\exp\{-\alpha n^2\} + 2(N_0+1)^2\exp\{-\alpha (N_0+1)^2\}
\label{eq:bound S1}\\
\sum_{n=1}^{N_0+1} \exp\{-\beta n^2\} &< S^{(2)}(\beta) < \sum_{n=1}^{N_0+1} \exp\{-\beta n^2\} + 2\exp\{-\beta (N_0+1)^2\}
\label{eq:bound:S2}
\end{align}

We conclude this section with a reminder that our main sampling tool is \emph{rejection sampling - the series method} \cite{devroye1981series}. Thus, our algorithms are exact in the statistical sense, in that they yield outputs having the correct distribution. We do acknowledge that \emph{any} computer implementation will suffer from a small numerical error, which is due entirely to the fact that computers have finite numerical precision. As such, algebraic calculations are limited to a pre-determined numerical tolerance. We also work under the assumption that unlimited iid $\sU, \sU', \sU'',\ldots \sim {\rm Uniform}(0,1)$ random variates can be obtained easily, at essentially zero computing cost, and we ignore the fact that such draws are pseudorandom.

Devroye's series method is described extensively in Chapter IV, Section 5 of \cite{devroye1986book}. In short, if one wishes to sample from a density $k(y) = \sum_{n\ge 0} a_n(y)$ satisfying the rejection sampling inequality $k(y) \le Mq(y)$, the accept/reject step requires one to decide if $\sU \times M q(Y)\le k(Y)$ or not, for some $Y\sim q(\cdot)$. This can be done \emph{exactly}, without the need to evaluate $k(Y)$ if $k$ can be approximated from above and below by monotone sequences $L_n(y)\nearrow k(y) \swarrow U_n(y)$, as $n\to \infty$. We assume that $L_n$ and $U_n$ have much simpler expressions and can be evaluated exactly. Select $N$ to be large enough such that either $\sU\times Mq(Y) < L_N(Y) < k(Y)$ or $\sU\times Mq(Y) > U_N(Y) > k(Y)$. In light of the convergence assumption above, such an $N$ can always be found. The decision to accept/reject $Y$ can now be made without the need to evaluate $k(Y)$, but simply evaluate $L_N(Y)$ and $U_N(Y)$. Henceforth, we will use $\lerh$ to denote this comparison strategy and testing whether
$$
\sU\times Mq(Y) \lerh k(Y)
$$
is interpreted to mean that if a series appears in the above inequality, it is approximated via monotonically convergent approximating sequences (from below and above) and the comparison is done by evaluating the approximating sequences (exactly) as explained above. We also note that it may be possible that both the right-hand side and the left-hand side above are expressed as series, with corresponding monotonically approximating sequences, from above and below. In this case the strategy can be easily extended and the decision is made in a similar fashion.

\section{Exact Interpolation and Extrapolation under Survival: Algorithms and Implementation}
\label{sec:sampling}

\subsection{Extrapolation under survival}
\label{sec:extrap}

We now tackle the first goal: exact sampling of $W_t^x$ conditionally on $A_x(t)$. In other words, we are interested in simulating the Brownian motion $W^x_t$ at a future time $t$, conditionally on it never leaving the interval $[0,a]$. This is known as \emph{extrapolation}. We distinguish two cases, based on the starting value of the Brownian motion process: (i) $0<x<a$ and (ii) $x=0$. The case $x = a$ is similar to $x=0$, via a symmetry argument. 

\textbf{Case I:} $0<x<a$. It is well known \cite{devroye2010exact, borodin2012handbook,ito2012diffusion,revuz2013continuous,herrmann2020exact} that, for $0<t<T$,
$$
\PP(W_t^x\in dy \mid A_x(t))  \propto g(y;t,x,a)dy
$$
where
\begin{align}
g(y;t,x,a)&=\sum_{n=-\infty}^\infty \frac{1}{\sqrt{2\pi t}}\left(  \exp\left\{ - \frac{(y + 2na- x)^2}{2t} \right\} -  \exp\left\{ - \frac{(y + 2na+ x)^2}{2t} \right\}\right) \nonumber\\
&= \frac{2}{a}\sum_{n=1}^\infty 
\sin\left(\frac{\pi n y}{a}\right)
\sin\left(\frac{\pi n x}{a}\right)
\exp\left\{-\frac{n^2\pi^2}{2a^2}t\right\}
\label{eq:g2}
\end{align}
for $0<y<a$. Sampling a variate with a density proportional to $g$ has been studied in \cite{devroye2010exact, herrmann2020exact} and we will not reproduce all the details here. Both approaches use rejection sampling technique (the series method), albeit with slightly different bounds. Python code implementing Devroye's algorithm is available in the project repository cited in the Code availability statement.
Figure \ref{fig:draws:g} illustrates our simulation results. We set $x=0.5$ in both panels and use $(a,t)=(1.8,1)$ in the left panel and $(a,t)=(2.5,0.2)$ in the right panel. These choices illustrate the two sampling regimes $a/\sqrt{t}<2$ and $a/\sqrt{t}>2$, respectively. Each panel shows a histogram of $10,000$ draws (grey), a kernel density estimate (blue curve), and the theoretical density (red curve). For illustration purposes, the normalizing constant is evaluated numerically.

\begin{figure}[tbp]
    \centering
    \includegraphics[width=\linewidth]{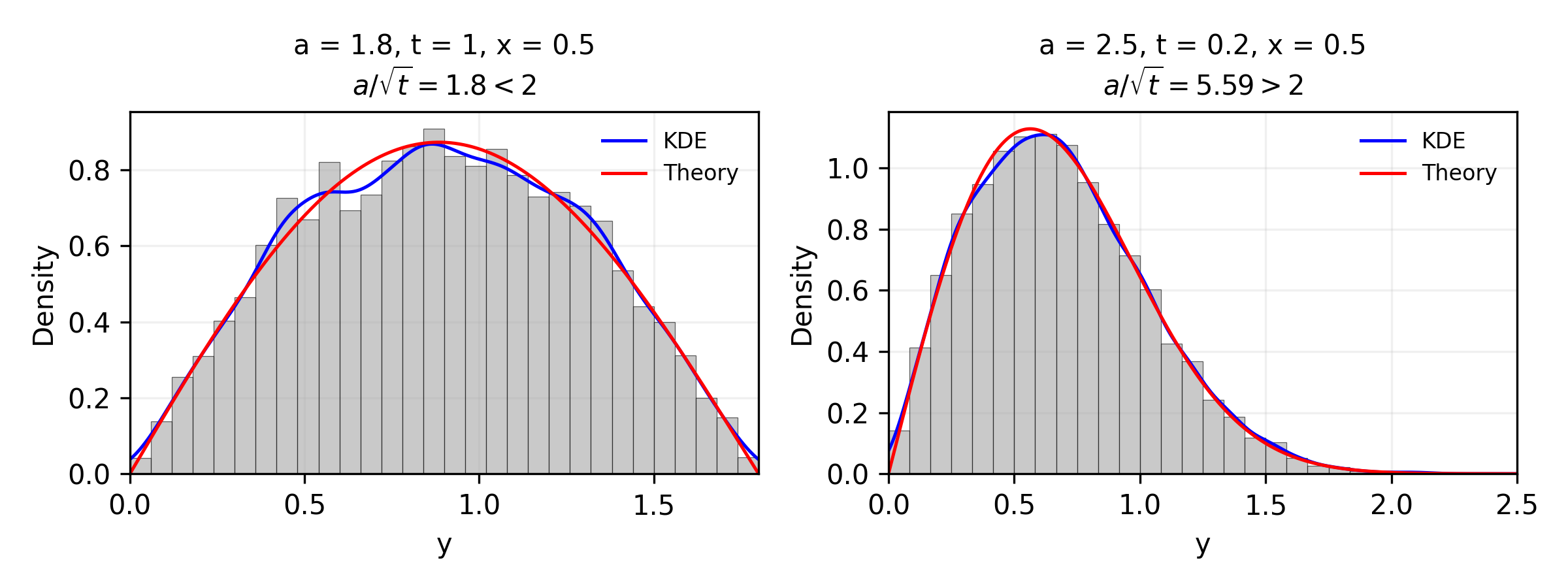}
    \caption{Histograms (grey) of $10,000$ draws from the density proportional to $g(\cdot;t,x,a)$, with kernel density estimates (blue curves) and numerically normalized theoretical densities (red curves). The draws are generated using the algorithm of \cite{devroye2010exact}. We set $x=0.5$ in both panels, with $(a,t)=(1.8,1)$ on the left and $(a,t)=(2.5,0.2)$ on the right, illustrating the regimes $a/\sqrt{t}<2$ and $a/\sqrt{t}>2$, respectively.}
    \label{fig:draws:g}
\end{figure}

In addition, we state the following result, which is proved in \cite{devroye2010exact}, as it will become relevant later.
\begin{proposition} \cite{devroye2010exact}
    \label{prop:bound:g}
    Define $\delta = 4\exp\{-3\pi^2/8\}$. With $g$ being the un-normalized density defined above, the following two bounds hold, for all $y\in [0,a]$:
    \begin{enumerate}
        \item If $a/\sqrt{t} < 2$, then
        \begin{align}
            \label{eq:bound:g:1}
            g(y;t,x,a) &\le \frac{\pi^2 x}{a(1-\delta)} \exp\left\{-\frac{\pi^2t}{2a^2}\right\}  \times \frac{2y}{a^2}\bfone\{0\le y\le a\} \nonumber \\
        &\le \frac{2\pi^2x}{a^2(1-\delta)}\exp\left\{-\frac{\pi^2t}{2a^2}\right\} 
        \stackrel{\rm def}{=} C^{(1)}_g(t,x,a)
        \end{align}
        \item If $a/\sqrt{t}\ge 2$, then 
        \begin{align}
            g(y;t,x,a)&\le \frac{1}{\sqrt{2\pi t }}\left(  \exp\left\{ - \frac{(y - x)^2}{2t} \right\} -  \exp\left\{ - \frac{(y + x)^2}{2t} \right\}\right)\bfone\{0<y<a\} \nonumber\\
            &\le \frac{1}{\sqrt{2\pi t }}\left(  1 -  \exp\left\{ - \frac{(a + x)^2}{2t} \right\}\right) \stackrel{\rm def}{=} C^{(2)}_g(t,x,a)\ .
            \label{eq:bound:g:2}
        \end{align}
    \end{enumerate}
\end{proposition}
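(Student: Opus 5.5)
The plan is to prove the two bounds separately, each time working from the representation in \eqref{eq:g2} that suits the regime. The small-$a/\sqrt{t}$ case uses the sine series with a geometric domination argument. The large-$a/\sqrt{t}$ case uses the image series, read as a killed transition density, together with a domain-monotonicity comparison.

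\emph{Case $a/\sqrt t<2$.}
\begin{itemize}
\item Start from the sine series in \eqref{eq:g2}. Use the elementary inequality $|\sin(n\theta)|\le n\theta$ for $\theta\ge 0$, once with $\theta=\pi y/a$ and once with $\theta=\pi x/a$. This gives
$$
g(y;t,x,a)\le \frac{2\pi^2 xy}{a^3}\sum_{n\ge1} n^2 e^{-n^2\pi^2 t/(2a^2)} .
$$
\item Factor out the leading term $e^{-\pi^2t/(2a^2)}$ from the sum. What remains is $1+\sum_{n\ge2} n^2 e^{-(n^2-1)\pi^2 t/(2a^2)}$.
\item Since $t/a^2>1/4$, each exponent satisfies $(n^2-1)\pi^2t/(2a^2)\ge (n^2-1)\pi^2/8$.
\item The $n=2$ term is then at most $4e^{-3\pi^2/8}=\delta$.
\item The key claim is that $n^2e^{-(n^2-1)\pi^2/8}\le \delta^{n-1}$ for all $n\ge2$. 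Take logarithms: the claim reduces to showing that $(n^2-1)\pi^2/8-2\log n-(n-1)(3\pi^2/8-\log 4)\ge0$. This is an explicit convex-type inequality in $n$. I would verify it at $n=2$, where it is an equality, and then check that the increment in $n$ is nonnegative.
\item Summing the geometric series bounds the bracket by $1/(1-\delta)$. This gives exactly the first inequality in \eqref{eq:bound:g:1}.
\item The second inequality follows from $y\le a$, i.e. $2y/a^2\le 2/a$.
\end{itemize}
The geometric domination is the only step needing care, and it is a routine calculus check.

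\emph{Case $a/\sqrt t\ge2$.} Attacking the image series term by term is awkward. The terms with $n\ge1$ are positive, and pairing $n$ with $-n$ leads to a monotonicity question for $u\mapsto e^{-u^2/2t}\cosh(2mau/t)$ that is delicate. Instead I will use a probabilistic comparison.
\begin{itemize}
\item The image sum is the transition density $\PP(W^x_t\in dy,\ \tau>t)/dy$ of Brownian motion killed on exiting $(0,a)$.
\item The bracketed expression in \eqref{eq:bound:g:2} is, by the reflection principle, the density $\PP(W^x_t\in dy,\ \tau_0>t)/dy$ of Brownian motion killed only at $0$.
\item Since $\{\tau>t\}\subset\{\tau_0>t\}$, we get $\PP(W^x_t\in dy,\tau>t)\le \PP(W^x_t\in dy,\tau_0>t)$ for every Borel $dy\subset(0,a)$. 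The densities, both continuous in $y$, compare pointwise, which yields the first inequality. Notice this step does not actually use $a/\sqrt t\ge2$; that condition only designates the regime where this envelope is the tighter one.
\item For the constant, bound $e^{-(y-x)^2/2t}\le1$ and, since $0<y<a$, $-e^{-(y+x)^2/2t}\le -e^{-(a+x)^2/2t}$. This gives $C^{(2)}_g$.
\end{itemize}

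\emph{Boundary values.} The statement allows $y\in\{0,a\}$, and these follow by continuity of both sides in $y$. At $y=0$ and $y=a$ the sine series vanishes, so the inequalities hold trivially there.

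The main obstacle I anticipate is the scalar inequality $n^2e^{-(n^2-1)\pi^2/8}\le\delta^{n-1}$ in the first case. If the direct log-difference argument proves clumsy, I would try ratio bounds between consecutive terms instead: show $(n+1)^2/n^2\cdot e^{-(2n+1)\pi^2/8}\le\delta$ for $n\ge2$. That is immediate because the left side is decreasing in $n$ and already below $\delta$ at $n=2$.
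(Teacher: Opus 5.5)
Your proof is correct. The paper gives no argument of its own for this proposition; it defers entirely to \cite{devroye2010exact}. What you have written is therefore a self-contained replacement rather than a variant of a proof in the text.

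\textbf{First case, $a/\sqrt t<2$.} Your route is the natural one: the sine series, $|\sin u|\le|u|$ applied to both factors, then domination of $\sum_{n\ge2}n^2e^{-\alpha_t(n^2-1)}$ by $\sum_{n\ge2}\delta^{n-1}$. It uses the same device the paper applies to $x_N^g$ and to \eqref{eq:bound:h2}. It also explains the constant: $\delta$ is exactly the $n=2$ term at the threshold $\alpha_t=\pi^2/8$, i.e.\ $a/\sqrt t=2$. Your fallback ratio argument is the cleaner way to settle the key step. The ratio $\frac{(n+1)^2}{n^2}e^{-(2n+1)\pi^2/8}$ is decreasing in $n$ and is already about $4.7\times10^{-3}<\delta\approx0.099$ at $n=2$.

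\textbf{Second case, $a/\sqrt t\ge2$.} Your domain-monotonicity comparison $\PP_x(W_t\in dy,\tau>t)\le\PP_x(W_t\in dy,\tau_0>t)$ is genuinely different from the analytic sign analysis of the image series that the paper uses for $h$ in Appendix~\ref{proof:bound:h}. It has two advantages. It avoids pairing the positive ($n\ge1$) and negative ($n\le-1$) image terms altogether. It also shows the first inequality holds for every $t>0$, so the condition $a/\sqrt t\ge2$ only decides which envelope is tighter.

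What it relies on is that the image series equals the killed kernel $p_t^D(x,y)$ exactly, including the factor $1/\sqrt{2\pi t}$, and not merely up to a constant. This is the classical method-of-images formula, and the paper uses it implicitly in Appendix~\ref{proof:markov}. Continuity of both sides then upgrades the almost-everywhere comparison to the pointwise one you need; for the image series this follows from its uniform convergence, shown in Appendix~\ref{app:image-bounds}.
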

\begin{proof}
    See \cite{devroye2010exact}.
\end{proof}

\medskip
\textbf{Case II:} $0=x<a$. The process $\{W^0_t\, , t\ge 0\}$, conditioned to stay in $[0,a]$ has the same law as a Brownian meander, restricted to the interval $[0,a]$ \cite{Imhof:1984:DFB,BianeYor2008}. The conditional density of $W^0_t$ given $A_0(t)$ can be found in several ways. One approach is to carefully take a limit as $x\to 0^+$ in \eqref{eq:g2}, taking into account that the normalizing constant, which depends on $x$, is missing. Alternatively, say $\{W^{\rm me}_s\,,\,0\le s\le t\}$ is a Brownian meander process.
\cite{durrett1977functionals} give the following joint probability:
$$
\begin{aligned}
&\PP\left(W^{\rm me}_t \le y\,,\, \sup_{0\le s\le t}W^{\rm me}_s \le a\right) \\
&\qquad = \sum_{n=-\infty}^\infty \exp\left\{ \frac{-2n^2a^2}{t}\right\} - \exp\left\{ - \frac{(2na + y)^2}{2t} \right\},\ 0\le y\le a \ .
\end{aligned}
$$
From here, after taking a derivative with respect to $y$, we find
\begin{align}
\PP(W^0_t\in dy\mid A_0(t)) &= \PP\left(W^{\rm me}_t\in dy \biggm| \sup_{0\le s\le t}W^{\rm me}_s \le a\right) dy\nonumber \\
&= \frac{\displaystyle \sum_{n=-\infty}^\infty \left(\frac{2na + y}{t}\right) \exp\left\{ - \frac{(2na + y)^2}{2t} \right\}  }{\PP\left(\sup_{0\le s\le t}W^{\rm me}_s\le a\right)}dy
\label{eq:h0}
\end{align}
For now, we will use the fact that this density is proportional to 
\begin{equation}
\label{eq:h1}
h(y;t,a) = \sum_{n=-\infty}^\infty (2na+y)\exp\left\{-\frac{(2na+y)^2}{2t}\right\}\ ,\ 0\le y\le a\ .
\end{equation}
Using the Jacobi theta identity \cite{NISTHandbook2010} we find the alternative representation
\begin{align}
\sum_{n=-\infty}^\infty (2na+y) \exp\left( -\frac{(2na+y)^2}{2t} \right)
&=
\frac{\sqrt{2}\pi^{3/2}t^{3/2}}{a^2} \sum_{n=1}^\infty n  \sin\left( \frac{\pi n y}{a} \right) \exp\left( -\frac{\pi^2  n^2}{2a^2} t \right) \nonumber \\
&= K^h \sum_{n=1}^\infty n  \sin\left( \frac{\pi n y}{a} \right) \exp\left( -\frac{\pi^2  n^2}{2a^2} t \right)\ ,
\label{eq:h2}
\end{align}
where the definition of the constant $K^h$ is clear from the display above. Next, we establish a suitable bound for $h(y;t,a)$ to be used in a rejection sampling algorithm. Denote
\begin{equation}
\label{eq:alpha:beta}
\alpha_t \stackrel{\rm def}{=} \frac{\pi^2 t}{2a^2}\ , \qquad \beta_t \stackrel{\rm def}{=} \frac{a^2}{2t} = \frac{\pi^2}{4\alpha_t}\ ,
\end{equation}
and observe that $\alpha_t$ directly affects how fast the exponential terms appearing in \eqref{eq:h1} and \eqref{eq:h2} are decaying. 
For large values of $\alpha_t$ the series in \eqref{eq:h2} will converge faster, while for small $\alpha_t$, the series \eqref{eq:h1} will result in a better bound. The next result establishes two alternative dominating envelopes for the function $h$.

\begin{theorem} 
\label{thm:bound:h}
With $\alpha_t, \beta_t$ defined above, the following two bounds hold for any $t>0$ and $y\in [0,a]$,
\begin{equation}
    \label{eq:bound:h1}
    h(y;t,a) \le  y\exp\left\{-\frac{y^2}{2t}\right\}+ y\displaystyle{\sum_{n=1}^{\infty}}\exp\left\{-\beta_t n^2\right\}
\end{equation}
and
\begin{equation}
    \label{eq:bound:h2}
    h(y;t,a) \le 4\pi^{-1/2}\alpha_t^{3/2} \sum_{n=1}^\infty n^2\exp\{-\alpha_t n^2\} \times \min\{ y,a-y\}\ .
\end{equation}
\end{theorem}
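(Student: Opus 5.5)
For \eqref{eq:bound:h2} I would use the spectral representation \eqref{eq:h2}. For \eqref{eq:bound:h1} I would use the image series \eqref{eq:h1} directly. The two bounds are independent, and the second is essentially a one-line estimate.

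\textbf{The bound \eqref{eq:bound:h2}.} Since $h\ge 0$, I will bound $h$ by the absolute value of the sine series. Then $h(y;t,a)\le K^h\sum_{n\ge1} n\,|\sin(n\theta)|\,e^{-\alpha_t n^2}$ with $\theta=\pi y/a$. I will use the elementary inequality $|\sin(n\theta)|\le n|\sin\theta|$, proved by induction from $\sin((n+1)\theta)=\sin n\theta\cos\theta+\cos n\theta\sin\theta$. Together with $|\sin\theta|\le \theta$ this gives $|\sin(n\pi y/a)|\le n\pi y/a$. Because $|\sin(n\pi(a-y)/a)|=|\sin(n\pi y/a)|$, the same estimate holds with $y$ replaced by $a-y$, so $|\sin(n\pi y/a)|\le n\pi\min\{y,a-y\}/a$. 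It remains to check the constant. From $K^h=\sqrt2\pi^{3/2}t^{3/2}/a^2$ we get $K^h\pi/a=\sqrt2\pi^{5/2}t^{3/2}/a^3$. Substituting $\alpha_t=\pi^2t/(2a^2)$ gives $4\pi^{-1/2}\alpha_t^{3/2}=\sqrt2\pi^{5/2}t^{3/2}/a^3$. The two agree, which yields \eqref{eq:bound:h2}.

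\textbf{The bound \eqref{eq:bound:h1}.} Write $f(u)=u\,e^{-u^2/(2t)}$, an odd function. The $n=0$ term of \eqref{eq:h1} is exactly $y e^{-y^2/(2t)}$. For $n\ge1$, pair the indices $n$ and $-n$; by oddness,
\[
f(2na+y)+f(-2na+y)=f(2na+y)-f(2na-y)=\int_{2na-y}^{2na+y} f'(u)\,du ,
\]
where $f'(u)=(1-u^2/t)e^{-u^2/(2t)}$. The goal is to show that each paired term is at most $y\,e^{-\beta_t n^2}=y\,e^{-(na)^2/(2t)}$. Summing over $n\ge1$ then gives \eqref{eq:bound:h1}. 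For $y\in[0,a]$ the integration range lies in $[(2n-1)a,(2n+1)a]\subset[na,\infty)$. On this range $e^{-u^2/(2t)}\le e^{-\beta_t n^2}$, and $f'(u)\le 0$ wherever $u^2\ge t$. So only the part of the range with $u^2<t$ contributes positively.

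\textbf{Main obstacle.} The crude mean-value estimate gives $2y\sup f'$, which loses a factor $2$. I would first try to recover this factor by exploiting cancellation: the integral runs over an interval of length $2y$ centred at $2na$, and $f'$ changes sign at $u=\sqrt t$. Concretely, I would split according to whether $\sqrt t$ lies below, inside, or above $[2na-y,2na+y]$.
\begin{itemize}
\item If $\sqrt t\le 2na-y$, the integral is nonpositive and the claim is trivial.
\item If the interval straddles $\sqrt t$, bound the positive part by $\int_{2na-y}^{\sqrt t}f'$. This is at most $f(\sqrt t)-f(2na-y)$, and it should be compared with $y e^{-\beta_t n^2}$ using $2na-y\ge (2n-1)a\ge na$.
\item If $2na+y<\sqrt t$, then $\beta_t n^2<1/8$. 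Here I would use the concavity of $f$ on $[0,\sqrt{3t}]$, which gives $f(2na+y)-f(2na-y)\le 2y f'(2na-y)\le 2y\,(1-(na)^2/t)\,e^{-(na)^2/(2t)}$. I would then try to absorb the factor $2(1-2\beta_t n^2)$.
\end{itemize}
If this last step still does not close, my fallback is to keep the paired terms and use monotonicity in $y$ more carefully. Any residual constant would then be absorbed via the tail control of Proposition~\ref{prop:h1}.
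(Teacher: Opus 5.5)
Your argument for \eqref{eq:bound:h2} is correct and is the paper's argument, including the constant check. The paper applies $|\sin u|\le|u|$ directly, so your induction $|\sin n\theta|\le n|\sin\theta|$ is harmless but unnecessary.

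For \eqref{eq:bound:h1} there is a genuine gap: your per-pair target $f(2na+y)-f(2na-y)\le y\,e^{-\beta_t n^2}$ is false, so no case analysis can close your third case. For fixed $a,y,n$ and $t\to\infty$, the left side tends to $(2na+y)-(2na-y)=2y$, while the right side tends to $y$. Concretely, take $n=1$, $y=a$, $t=16a^2$. This lies in your third case, since $3a<\sqrt t$. In units of $a$, the left side is $3e^{-9/32}-e^{-1/32}\approx1.295$, while the right side is $e^{-1/32}\approx0.969$. Your own case-3 estimate signals this: there $\beta_t n^2<1/8$, so $2(1-2\beta_t n^2)>3/2$, and the true pair value has the same limit $2y$. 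The factor $2$ is real, not an artefact of the mean-value bound. The fallback does not help either. The right-hand side of \eqref{eq:bound:h1} is fixed, and Proposition~\ref{prop:h1} only controls series tails, so it cannot turn a factor $2$ into $1$.

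The missing idea is the bookkeeping. Each pair $n\ge1$ should be charged two terms of the right-hand series, with indices $m=2n-1$ and $m=2n$; together these exhaust all $m\ge1$. In your integral framework this is immediate. Since $f'(u)\le e^{-u^2/(2t)}$ and the latter is decreasing on $[0,\infty)$, split $[2na-y,2na+y]$ at its midpoint $2na$ and use $2na-y\ge(2n-1)a$:
\[
f(2na+y)-f(2na-y)\le y\,e^{-(2na-y)^2/(2t)}+y\,e^{-(2na)^2/(2t)}\le y\bigl(e^{-\beta_t(2n-1)^2}+e^{-\beta_t(2n)^2}\bigr).
\]
Summing over $n\ge1$ and adding the $n=0$ term gives exactly \eqref{eq:bound:h1}, with no case split on the position of $\sqrt t$.

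The paper does the same thing algebraically. It writes $2na+y=y+2na$ and notes that the $2na$-part pairs to $2a\sum_{n\ge1}n\bigl[e^{-(2na+y)^2/(2t)}-e^{-(2na-y)^2/(2t)}\bigr]\le0$. It then bounds the two Gaussians in the $y$-part by $e^{-\beta_t(2n)^2}$ and $e^{-\beta_t(2n-1)^2}$.
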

\begin{proof}
    See Appendix \ref{proof:bound:h}.
\end{proof}
Since we aim to develop a rejection sampler for $h$, the choice between \eqref{eq:bound:h1} and \eqref{eq:bound:h2} is made by comparing
\begin{align*}
I_1(t,a) &= \int_0^a y\exp\left\{-\frac{y^2}{2t}\right\}+ y \sum_{n=1}^{\infty}\exp\left\{\frac{-n^2\pi^2}{4\alpha_t}\right\}\ dy
\qquad \mbox{ and }\\
I_2(t,a) &= \int_0^a 4\pi^{-1/2}\alpha_t^{3/2} \sum_{n=1}^\infty n^2\exp\{-\alpha_t n^2\} \times \min\{ y,a-y\}\ dy\ .
\end{align*}
Standard calculations reveal that
\begin{align*}
    I_1(t,a) &= \frac{a^2}{2}\left[ \frac{4\alpha_t}{\pi^2}\left(1- \exp\left\{-\frac{\pi^2}{4\alpha_t}\right\} \right) + \sum_{n=1}^{\infty}\exp\left\{\frac{-n^2\pi^2}{4\alpha_t}\right\}\right]\qquad \mbox{ and }\\
    I_2(t,a) &= \frac{a^2}{2} \left[ 2\pi^{-1/2}\alpha_t^{3/2} \sum_{n=1}^\infty n^2\exp\{-\alpha_t n^2\}\right]
\end{align*}
and thus $\alpha_t$ is the sole factor in determining which integral is larger. Figure \ref{fig:int:comparison} displays a graph of the two integrals (scaled by $2/a^2$), as functions of $\alpha_t$, evaluated numerically. 

\begin{figure}[tbp]
    \centering
    \includegraphics[width=0.85\linewidth]{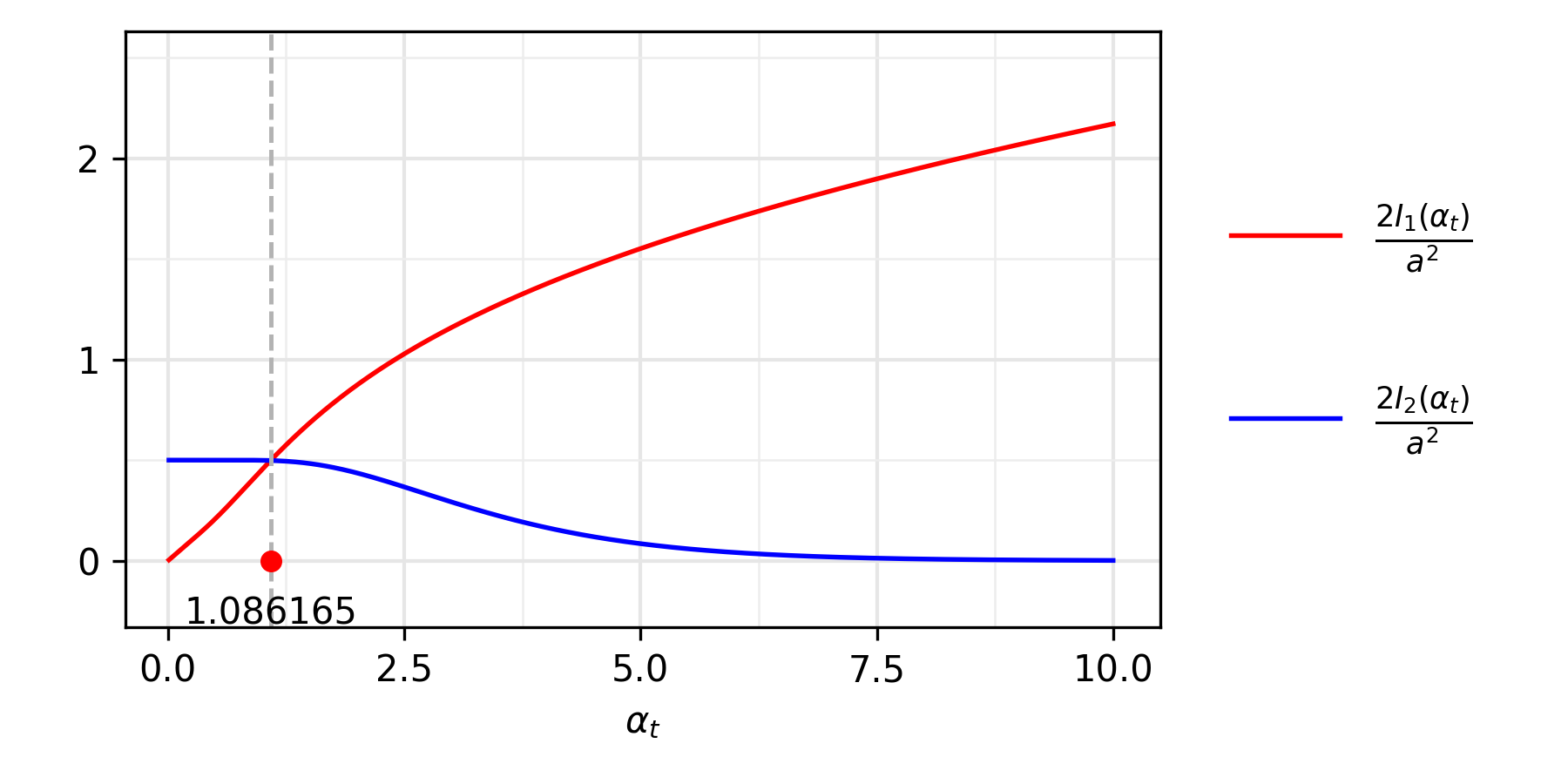}
    \caption{Integrals $I_1(t,a)$ (red) and $I_2(t,a)$ (blue), scaled by the factor $(2/a^2)$, displayed as functions of $\alpha_t$.}
    \label{fig:int:comparison}
\end{figure}

Numerically solving $I_1(t,a)=I_2(t,a)$ gives the crossover $\alpha_0\approx1.086165$ and the corresponding value $\beta_0=\pi^2/(4\alpha_0)\approx2.271664$, both rounded to six decimal places. We note that the exact values of $\alpha_0$ or $\beta_0$ are not necessary, since the bounds in \eqref{eq:bound:h1} and \eqref{eq:bound:h2} hold for any $a,t$. 

We can now design an efficient rejection sampler for $h$. For $\alpha_t<\alpha_0$ the bound \eqref{eq:bound:h1} will be sharper, while for $\alpha_t\ge \alpha_0$, the bound \eqref{eq:bound:h2} is better. 

\textit{Case 1: $\alpha_t<\alpha_0$ $(\beta_t>\beta_0)$}. We recognize the first term in the bound \eqref{eq:bound:h1} as being proportional to a Rayleigh random variable 
$$
\sR \eqd \sqrt{ -2t \log\left( 1 - \sU \cdot \left(1 - \exp\left\{-\beta_t \right\} \right) \right) },\qquad
f_\sR(y)\propto y\exp\left\{-\frac{y^2}{2t}\right\}\bfone\{0\le y\le a\}\ ,
$$ 
while the second term as being proportional to the density of $a\sqrt{\sU}$. Thus, defining $N_0 = N_0(\alpha_t,\beta_t)$ as in \eqref{eq:N0}, and the constants
\begin{equation}
\begin{aligned}
\tilde D_1 &= t(1-\exp\{-a^2/(2t)\}),\\
\tilde D_2 &= \frac{a^2}{2}
\left(
\sum_{n\ge 1}^{N_0+1} \exp\left\{-\beta_t n^2\right\}
+ \exp\left\{-\beta_t (N_0+1)^2\right\}
\right)
\ ,
\end{aligned}
\label{eq:D1:D2}
\end{equation}
then \eqref{eq:bound:h1} becomes
\begin{align}
    \label{eq:bound:h1:IS}
    h(y;t,a) &\le \tilde D_1 f_{\sR}(y) + \tilde D_2 f_{a\sqrt{\sU}}(y)\ ,\quad 0\le y\le a. \nonumber \\
    & = (\tilde D_1 + \tilde D_2) \left(\frac{\tilde D_1}{\tilde D_1 + \tilde D_2}f_{\sR}(y) + \frac{\tilde D_2}{\tilde D_1 + \tilde D_2}f_{a\sqrt{\sU}}(y)\right) \ ,\quad 0\le y\le a. \nonumber \\ 
    &\stackrel{\rm def}{=} (\tilde D_1 + \tilde D_2) q_h^{(1)}(y)\ ,
\end{align}
where the definition of the mixture density $q_h^{(1)}$ is clear from above. The procedure for sampling from $h$ in this case is then given in Algorithm \ref{alg:draw:h1}.
\begin{algorithm}[!ht]
  \caption{Exact Simulation of $W_t^0\mid A_0(t)$, when $\alpha<\alpha_0$.}
  \label{alg:draw:h1}
  \begin{algorithmic}[1]
    \STATE Input $t,a$; Calculate $\alpha_t, \beta_t, \tilde D_1, \tilde D_2$.
    \STATE Calculate the mixture weight $p = \tilde D_1/(\tilde D_1 + \tilde D_2)$;
    \STATE Simulate $\sU, \sU_1, \sU_2$;
    \STATE If $\sU_1<p$ set $Y = \sqrt{ -2t \log\left( 1 - \sU_2 \cdot \left(1 - \exp\left\{-\beta_t \right\} \right) \right) } $
    ; Else, set $Y = a\sqrt{\sU_2}$;
    \STATE If $\sU\cdot (\tilde D_1 + \tilde D_2) \cdot q_h^{(1)}(Y;t,a) \lerh h(Y;t,a)$ accept. Else, reject and return to STEP 3. 
    \STATE Output $Y$.
  \end{algorithmic}
\end{algorithm}
Recalling that the normalizing constant for $h$ is $t\PP(\sup_{0\le s\le t}W^{\rm me}_s \le a)$, see \eqref{eq:h0}, the acceptance rate of this algorithm is 
$$
\tilde A_1 = \frac{t\PP\left(\sup_{0\le s\le t}W^{\rm me}_s\le a\right)}{\tilde D_1 + \tilde D_2}\ .
$$
It is well known that $\sup_{0\le s\le t}W^{\rm me}_s \eqd 2\sqrt{t}\sK$, where $\sK$ has a Kolmogorov-Smirnov distribution \cite{Kolmogorov1933}. Then, 
\begin{align*}
\PP\left(\sup_{0\le s\le t}W^{\rm me}_s \le a\right) &= \PP\left(\sK \le \frac{a}{2\sqrt{t}}\right) 
 = 1 + 2\sum_{n=1}^\infty (-1)^n \exp\left\{-n^2\frac{a^2}{2t}\right\}\\
 &= 1 + 2\sum_{n=1}^\infty (-1)^n \exp\left\{-\beta_t n^2\right\}\ .
\end{align*}
Algorithm \ref{alg:draw:h1} requires a computable value $\tilde D_2$, rather than the exact series representation, in order to evaluate the mixture weight $p$. Based on the envelope in \eqref{eq:bound:h1} the hypothetical acceptance probability of Algorithm \ref{alg:draw:h1}, can be defined as
$$
A_1 = A_1(\beta_t) = \frac{\displaystyle 1 + 2\sum_{n=1}^\infty (-1)^n \exp\left\{- \beta_t n^2\right\}}
{\displaystyle \left(1-\exp\left\{-\beta_t \right\}\right) + \beta_t \sum_{n=1}^\infty \exp\left\{-\beta_t n^2\right\} }
$$
However, the denominator is replaced by the computable upper bound in \eqref{eq:D1:D2}, and thus the actual acceptance probability becomes 
\[
\widetilde A_1 = \tilde{A_1}(\beta_t)
=
A_1(\beta_t)
\frac{ t\left[\displaystyle \left(1-\exp\left\{-\beta_t \right\}\right) + \beta_t \sum_{n=1}^\infty \exp\left\{-\beta_t n^2\right\} \right]}{\widetilde D_1+\widetilde D_2}
\le A_1(\beta_t).
\]
The next result establishes an interesting property of $A_1$, which can then be used to construct a lower bound for $\tilde{A_1}$.

\begin{lemma}
    On the domain $\beta_t \in (\beta_0, \infty)$, the function $A_1(\beta_t)$ is increasing in $\beta_t$.
    \label{lemma:acc:I}
\end{lemma}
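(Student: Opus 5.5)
Write $q=e^{-\beta}$ with $\beta=\beta_t>\beta_0\approx 2.27$, so that $q<e^{-2.27}\approx 0.103$. Then
\[
A_1(\beta)=\frac{N(\beta)}{D(\beta)},\qquad N(\beta)=1-2q+2q^4-2q^9+\cdots,\qquad D(\beta)=1-q+\beta\left(q+q^4+q^9+\cdots\right).
\]
The plan is to show $(\log A_1)'=N'/N-D'/D>0$ on $(\beta_0,\infty)$, equivalently $N'D-ND'>0$. Every series here is dominated by its first term, with relative corrections of order $q^3<10^{-3}$. I would therefore compute the leading-order behaviour explicitly and control the tails with the remainder bounds of Proposition~\ref{prop:h1}, applied with $\beta=\beta_t$ and some fixed $\rho$; the alternating sum in $N$ can instead be bracketed by its partial sums.

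\textbf{Derivatives.} Using $dq/d\beta=-q$,
\[
N'(\beta)=2q-8q^4+18q^9-\cdots,\qquad D'(\beta)=q+\sum_{n\ge1}q^{n^2}-\beta\sum_{n\ge1}n^2q^{n^2}=2q+q^4+\cdots-\beta\left(q+4q^4+\cdots\right).
\]
To leading order, $N'/N\approx 2q/(1-2q)$ and $D'/D\approx q(2-\beta)/(1+(\beta-1)q)$. The leading part of $N'D-ND'$ is then
\[
2q\bigl(1+(\beta-1)q\bigr)-(1-2q)(2-\beta)q
= q\Bigl(2+2(\beta-1)q-(2-\beta)(1-2q)\Bigr)
= q\bigl(\beta+2\beta q-2q\bigr)
= q\bigl(\beta(1+2q)-2q\bigr).
\]
This is at least $q(\beta-2q)>q(2.27-0.21)>0$.

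\textbf{Tails.} Write $N'D-ND'=q(\beta+2\beta q-2q)+E(\beta)$, where $E$ collects all terms containing $q^4$ or higher powers, including cross products such as $\beta q\cdot q^4$. Since $\beta q=\beta e^{-\beta}\le e^{-1}$, $\beta^2q\le 4e^{-2}$, and $q\le e^{-\beta_0}$ in this range, each term of $E$ is bounded by a constant times $q^4(1+\beta)$, or times $q^4\beta^2$ at worst. Hence
\[
|E|\le C\,q^4(1+\beta)^2
\]
with an explicit $C$, obtained from the geometric-type tail bounds: $\sum_{n\ge2}q^{n^2}\le 2q^4$ and $\sum_{n\ge2}n^2q^{n^2}\le 2\cdot4q^4$ for $q$ small, as in Proposition~\ref{prop:h1}. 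It then suffices that $C\,q^3(1+\beta)^2<\beta-2q$ on $(\beta_0,\infty)$. The left side is at most $C e^{-3\beta}(1+\beta)^2$, which is decreasing for $\beta>\beta_0$ and of order $10^{-3}C$ at $\beta_0$. The inequality therefore holds as long as $C$ is moderate, which I expect.

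\textbf{Main obstacle.} The hard part is the honest bookkeeping of $E$: there are many cross terms, and the factors of $\beta$ multiplying the tail sums must be tracked. The key is that $\beta$ appears at most quadratically and always alongside $q^4$. If the constant turns out uncomfortably large near $\beta_0$, the fallback is to split the domain. On a compact piece $[\beta_0,B]$ I would verify $N'D-ND'>0$ with rigorous interval evaluation of the truncated series plus tail bounds. For $\beta>B$ the crude estimate wins trivially.
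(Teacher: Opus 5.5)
Your route is sound in outline but much heavier than necessary, and as written it is unfinished. The constant $C$ in $|E|\le Cq^4(1+\beta)^2$ is never computed (``which I expect''). There is also an arithmetic slip in the leading bracket: $2+2(\beta-1)q-(2-\beta)(1-2q)$ equals $\beta+2q$, not $\beta+2\beta q-2q$. This is harmless for the sign. The bookkeeping would in fact close easily. For fixed $q$, $N$ and $N'$ carry no factor of $\beta$, while $D$ and $D'$ are affine in $\beta$. Hence $E$ is only $O(q^4(1+\beta))$ with a modest constant, against a main term at least $\beta q$, where $q^3<1.1\times10^{-3}$ on this range.

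The paper avoids all of this by looking at signs rather than sizes, and your own formulas already display them. For $\beta>2$ every term of
\[
D'(\beta)=(2-\beta)q+\sum_{n\ge2}(1-\beta n^2)\,q^{n^2}
\]
is strictly negative, so the positive denominator is decreasing. The numerator $N(\beta)=1+2\sum_{n\ge1}(-1)^nq^{n^2}$ is the Kolmogorov distribution function evaluated at $\sqrt{\beta/2}$, hence positive and increasing. Alternatively, $N'=2\sum_{n\ge1}(-1)^{n+1}n^2q^{n^2}\ge 2q-8q^4>0$, because the terms of this alternating series decrease in absolute value. Since $N>0$, $D>0$, $N'>0$ and $D'<0$, the inequality $N'D-ND'>0$ is immediate. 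No expansion, error term or interval arithmetic is needed. The argument in fact works on all of $[2,\infty)$, independently of the numerical value of $\beta_0$. What your approach buys in exchange is a quantitative lower bound on $(\log A_1)'$, of order $\beta e^{-\beta}$, which the lemma does not require.
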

\begin{proof}
    See Appendix \ref{proof:lemma:acc:I}.
\end{proof}

We now quantify the difference between $A_1(\beta_t)$ and the actual
acceptance probability $\widetilde A_1$. Fix $\beta_t\ge\beta_0$ and
recall that $a^2/2=t\beta_t$. Integrating the envelope in \eqref{eq:bound:h1} gives
\[
I_1(t,a)
=
t\left[
1-e^{-\beta_t}
+\beta_t\sum_{n=1}^{\infty}e^{-\beta_t n^2}
\right].
\]
This gives
\[
\widetilde D_1+\widetilde D_2-I_1(t,a)
=
t\beta_t\left[
e^{-\beta_t(N_0+1)^2}
-\sum_{n=N_0+2}^{\infty}e^{-\beta_t n^2}
\right]
\le 
t\beta_t e^{-\beta_t(N_0+1)^2},
\]
where the last inequality follows from Proposition~\ref{prop:h1}. Moreover, $\beta_t\ge\beta_0$ implies $N_0\ge2$. We therefore obtain the uniform bound
\[
0\le
\widetilde D_1+\widetilde D_2-I_1(t,a)
\le t\beta_t e^{-9\beta_t}.
\]

We also need a lower bound on $I_1(t,a)$. Retaining only the first
term of its positive series, and using $\beta_t\ge\beta_0>1$, gives
\[
\frac{I_1(t,a)}{t}
\ge
1-e^{-\beta_t}+\beta_t e^{-\beta_t}
=
1+(\beta_t-1)e^{-\beta_t}
\ge1.
\]
Combining the last two bounds, we have
\[
0\le
\frac{\widetilde D_1+\widetilde D_2-I_1(t,a)}{I_1(t,a)}
\le \beta_t e^{-9\beta_t}.
\]
Finally, note that
\[
\widetilde A_1
=
\frac{A_1(\beta_t)}
{1+
\displaystyle
\frac{\widetilde D_1+\widetilde D_2-I_1(t,a)}{I_1(t,a)}
},
\]
and therefore
\[
\frac{A_1(\beta_t)}{1+\beta_t e^{-9\beta_t}}
\le \widetilde A_1
\le A_1(\beta_t).
\]
By Lemma~\ref{lemma:acc:I}, $A_1(\beta_t)\ge A_1(\beta_0)$, and using the fact that  
$\beta e^{-9\beta}$ is decreasing for $\beta\ge\beta_0$, it follows that
\[
\widetilde A_1
\ge
\frac{A_1(\beta_t)}{1+\beta_t e^{-9\beta_t}}
\ge
\frac{A_1(\beta_0)}{1+\beta_0 e^{-9\beta_0}} > 0.70.
\]
Thus, Algorithm 1 has an acceptance probability exceeding $70\%$
throughout its stated regime. Finally, since
$A_1(\beta_t)\to1$ and $\beta_t e^{-9\beta_t}\to0$ as
$\beta_t\to\infty$, the preceding two-sided bound also gives
$\widetilde A_1\to1$ as $\beta_t\to \infty$.

\textit{Case 2: $\alpha_t\ge \alpha_0$ ($\beta_t \le \beta_0$).} In this case the bound \eqref{eq:bound:h2} is better and we recognize the last term in \eqref{eq:bound:h2} as being proportional to the density of the random variable
\begin{equation}
\ell(\sU) = 
\begin{cases}
 (a/\sqrt{2}) \sqrt{\sU}, &  \sU < 0.5\ , \\
 a - (a/\sqrt{2}) \sqrt{1 - \sU}, &  \sU \ge 0.5\ .
\end{cases}
\qquad f_{\ell(\sU)}(y) \propto \min\{y,a-y\}\bfone\{0\le y\le a\}\ .
\label{eq:q_h}
\end{equation}
Defining the constant
$$
\tilde D_3 = 4\pi^{-1/2}\alpha_t^{3/2} \sum_{n=1}^\infty n^2\exp\{-\alpha_t n^2\}\cdot \frac{a^2}{4}\ ,
$$
we see that \eqref{eq:bound:h2} gives that 
$$
h(y;t,a)\le \tilde D_3 f_{\ell(\sU)}(y),\ \  0\le y\le a\ .
$$
The rejection sampling procedure for simulating from $h$ in this case is given in Algorithm \ref{alg:draw:h2}.

\begin{algorithm}[tbp]
  \caption{Exact Simulation of $W_t^0\mid A_0(t)$ when $\alpha\ge \alpha_0$.}
  \label{alg:draw:h2}
  \begin{algorithmic}[1]
    \STATE Input $t,a$; 
    \STATE Simulate $\sU, \sU_1$; Set $Y = \ell(\sU_1)$;
    \STATE If $\sU \cdot \tilde D_3 \cdot f_{\ell(\sU_1)}(Y) \lerh h(Y;t,a)$ accept. Else, return to STEP 2. 
    \STATE Output $Y$.
  \end{algorithmic}
\end{algorithm}

We analyze the acceptance rate for Algorithm \ref{alg:draw:h2} similarly, as follows: 
\begin{align*}
A_2 = A_2(\beta_t) &= \frac{t\PP\left(\sup_{0\le s\le t}W^{\rm me}_s \le a\right)}{\tilde D_3} \\
&= \frac{\displaystyle 1 + 2\sum_{n=1}^\infty (-1)^n \exp\left\{- \beta_t n^2\right\}}
{\displaystyle 4\pi^{-1/2} \left(\frac{\pi^2}{4\beta_t}\right)^{3/2} \sum_{n=1}^\infty n^2\exp\left\{-\frac{n^2\pi^2}{4\beta_t}\right\}\cdot \frac{\beta_t}{2}} \\
&= \frac{\displaystyle 1 + 2\sum_{n=1}^\infty (-1)^n \exp\left\{- \beta_t n^2\right\}}
{\displaystyle 4^{-1}\pi^{5/2}\beta_t^{-1/2}\sum_{n=1}^\infty n^2\exp\left\{-\frac{n^2\pi^2}{4\beta_t}\right\}} \ .
\end{align*}
\begin{theorem}
    Over the domain $0<\beta_t\le \beta_0$, the function $A_2(\beta_t)$ is decreasing in $\beta_t$.
    \label{thm:acc:II}
\end{theorem}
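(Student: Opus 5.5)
The plan is to first rewrite $A_2$ as a function of $\alpha_t=\pi^2/(4\beta_t)$. In that form both numerator and denominator become rapidly convergent series with exponents in $\alpha_t$. Monotonicity then reduces to a comparison of two weighted averages.

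\textbf{Step 1 (Jacobi transform of the numerator).} The numerator is the Kolmogorov distribution function $\PP(\sK\le x)$ with $x^2=\beta_t/2$. The Jacobi theta identity \cite{NISTHandbook2010}, already used for \eqref{eq:h2}, gives the dual representation
\[
1+2\sum_{n\ge1}(-1)^n e^{-\beta_t n^2}=2\sqrt{\pi/\beta_t}\sum_{k\ge1}e^{-(2k-1)^2\pi^2/(4\beta_t)}.
\]
Substituting this into $A_2$, the factors $\beta_t^{-1/2}$ cancel. Writing $\alpha=\pi^2/(4\beta_t)$, this should yield
\[
A_2=\frac{8}{\pi^2}\,F(\alpha),\qquad
F(\alpha)=\frac{\sum_{k\ge1}e^{-(2k-1)^2\alpha}}{\sum_{n\ge1}n^2e^{-n^2\alpha}}.
\]
Since $\beta_t\mapsto\alpha$ is decreasing, and $\beta_t\le\beta_0$ corresponds to $\alpha\ge\alpha_0\approx1.086$, the claim becomes: $F$ is increasing on $[\alpha_0,\infty)$.

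\textbf{Step 2 (log-derivative).} Differentiating termwise, which is justified by the dominated, absolutely convergent series of Proposition~\ref{prop:h1}, gives
\[
\frac{F'(\alpha)}{F(\alpha)}=\mathbb{E}_{Q}[n^2]-\mathbb{E}_{P}[m^2].
\]
Here $P$ puts mass $\propto e^{-m^2\alpha}$ on the odd integers $m$, and $Q$ puts mass $\propto n^2e^{-n^2\alpha}$ on $n\ge1$. It therefore suffices to show $\mathbb{E}_Q[n^2]>\mathbb{E}_P[m^2]$ for $\alpha\ge\alpha_0$. Subtracting $1$ from both sides, we have
\[
\mathbb{E}_P[m^2]-1=\frac{\sum_{k\ge2}((2k-1)^2-1)e^{-((2k-1)^2-1)\alpha}}{1+\sum_{k\ge2}e^{-((2k-1)^2-1)\alpha}}\le \sum_{k\ge2}((2k-1)^2-1)e^{-((2k-1)^2-1)\alpha},
\]
which is dominated by $8e^{-8\alpha}$ plus a tail of order $24e^{-24\alpha}$. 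On the other side,
\[
\mathbb{E}_Q[n^2]-1=\frac{\sum_{n\ge2}(n^2-1)n^2e^{-(n^2-1)\alpha}}{1+\sum_{n\ge2}n^2e^{-(n^2-1)\alpha}}\ge\frac{12e^{-3\alpha}}{1+\sum_{n\ge2}n^2e^{-(n^2-1)\alpha}}.
\]

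\textbf{Step 3 (explicit comparison, the main obstacle).} The remaining work is the quantitative inequality
\[
\frac{12e^{-3\alpha}}{1+4e^{-3\alpha}+9e^{-8\alpha}+\cdots}>8e^{-8\alpha}+24e^{-24\alpha}+\cdots\qquad(\alpha\ge\alpha_0).
\]
To make it rigorous, I would bound both tails with Proposition~\ref{prop:h1} (taking $\rho=1/2$), so that each side is replaced by finitely many explicit exponentials. Multiplying through by $e^{3\alpha}$, the left side stays above $12/(1+4e^{-3.2}+\ldots)\approx 10$, while the right side is at most about $8e^{-5\alpha}\lesssim0.04$. The inequality thus holds with a wide margin. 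In fact it appears to hold for all $\alpha\ge\alpha_1$ with some $\alpha_1$ well below $\alpha_0$, so no delicate numerics near the crossover are needed.

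The only real care is in keeping the tail estimates uniform in $\alpha$. Since every tail term is monotonically decreasing in $\alpha$, it suffices to check them at $\alpha=\alpha_0$. This gives $F'>0$, hence $A_2$ is decreasing in $\beta_t$ on $(0,\beta_0]$. As a byproduct, $A_2\to 8/\pi^2$ as $\beta_t\to0$.
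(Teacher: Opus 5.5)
Your proof is correct, and at its core it is the same identity as the paper's.

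The paper writes $\Lambda=N/D=\EE[\phi(X)]$ with $X\sim Q$ and $\phi(n)=\bfone\{n\ \mathrm{odd}\}/n^2$, so that $\Lambda'=-\Cov(\phi(X),X^2)$. Since $\EE_Q[\phi(X)X^2]=\Lambda\,\EE_P[m^2]$, this is exactly your $\Lambda\,(\EE_Q[n^2]-\EE_P[m^2])$.

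The difference is in how the sign is established.
\begin{itemize}
\item \textbf{The paper's route.} It uses the single-crossing covariance lemma (Lemma~\ref{lem:short}). Because $\phi(1)=1$ and $\phi(n)\le 1/9$ for $n\ge2$, it only needs the crude bound $1/9<\Lambda(\alpha)<1$. It gets this from $e^{\alpha}D(\alpha)<13/2$ for $\alpha\ge1$. No tail comparison is needed, and the argument is structural.
\item \textbf{Your route.} You compare the two second moments directly through their leading corrections: $12e^{-3\alpha}$ from the even index $n=2$ under $Q$, against $8e^{-8\alpha}$ from $m=3$ under $P$.
\end{itemize}

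Your approach avoids the auxiliary lemma and makes the mechanism transparent. It also gives strict positivity of $F'$ with an explicit margin. The cost is that you must control two tails.

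That tail control is simpler than invoking Proposition~\ref{prop:h1}. For $\alpha\ge1$ you have $\sum_{n\ge2}n^2e^{-(n^2-1)\alpha}<\sum_{n\ge2}n^2 2^{-n}=11/2$. After multiplying by $e^{3\alpha}$, your left side is therefore at least $12/6.5$. Your right side is then at most $8e^{-5}+24e^{-21}+\cdots<0.1$. So, like the paper's, your comparison holds on all of $\alpha\ge1$, not just $\alpha\ge\alpha_0$.
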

\begin{proof}
    See Appendix \ref{proof:thm:acc:II}.
\end{proof}
As a consequence, for all $\beta_t\le\beta_0$, we have $A_2(\beta_t) \ge A_2(\beta_0)\approx 0.70$. Thus, Algorithm \ref{alg:draw:h2} has a minimum acceptance rate of $70\%$. In the limit, 
$$
\lim_{\beta_t\to 0}A_2(\beta_t) = \lim_{\alpha_t\to \infty}A_2(\pi^2/(4\alpha_t)) = \frac{8}{\pi^2}\ ,
$$
where the last equality follows easily from the expression of $A_2$ given in the proof of Theorem \ref{thm:acc:II}. Figure \ref{fig:acc:rate:h} illustrates $A_1(\beta_t)$ (blue curve) and $A_2(\beta_t)$ (red curve).

\begin{figure}[tbp]
    \centering
    \includegraphics[width=0.85\linewidth]{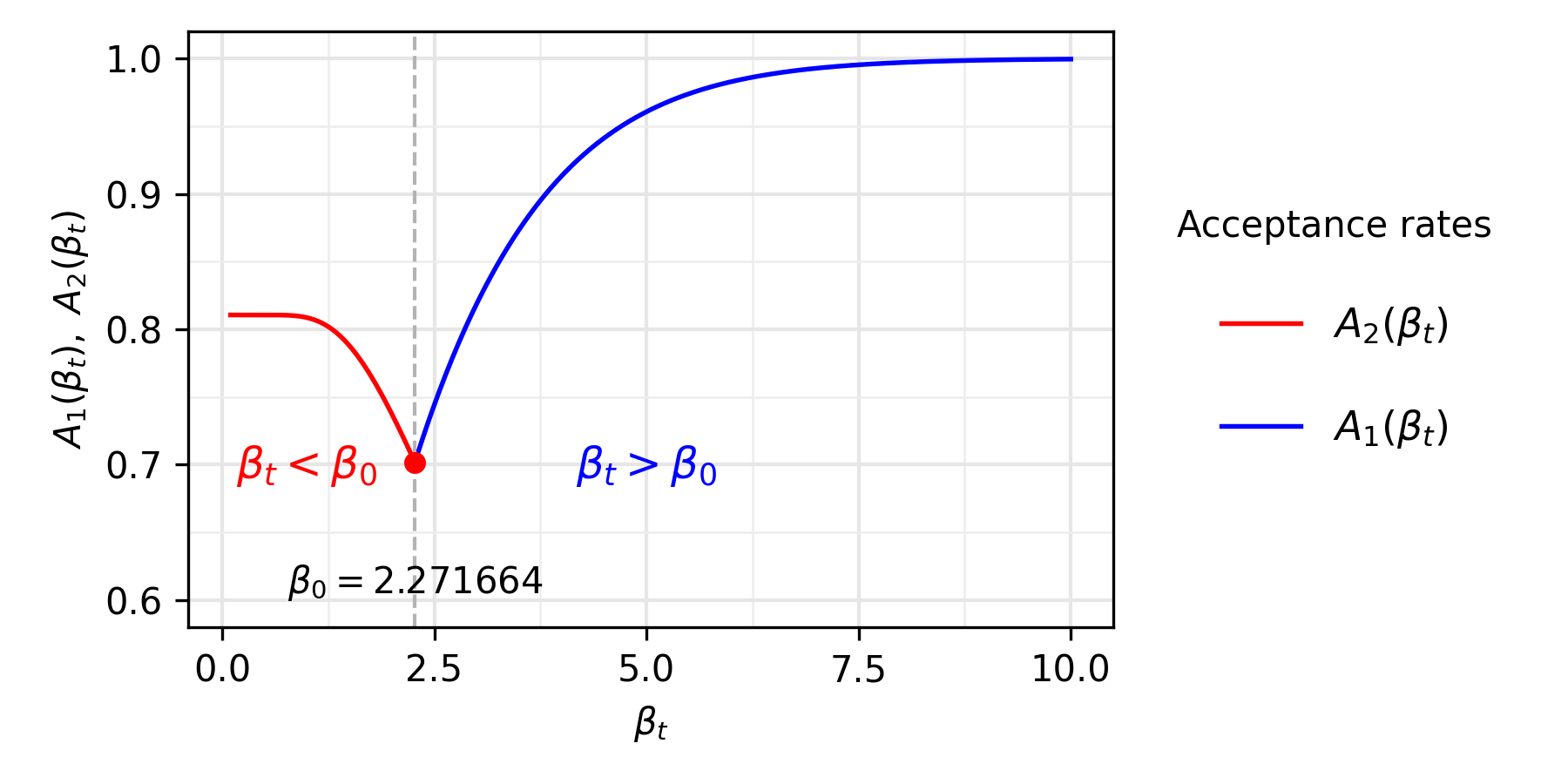}
    \caption{The acceptance rate of Algorithm \ref{alg:draw:h1} (blue line) and Algorithm \ref{alg:draw:h2} (red line) as a function of $\beta_t$.}
    \label{fig:acc:rate:h}
\end{figure}

\subsubsection{Approximating sequences}
Both Algorithms \ref{alg:draw:h1} and \ref{alg:draw:h2} require the use of a \emph{series comparison} at the accept/reject step. We now specify the necessary approximating sequences, so that this comparison can be performed exactly, without the need to approximate series with finite sums.  We note that the constant $\tilde D_1$ and $\tilde D_2$ can be evaluated exactly, thus, there is no need for a further approximation. The constant $\tilde D_3$ can be approximated as follows.
Based on \eqref{eq:bound S1}, for $N >  N_0(\alpha_t, \beta_t)$ defined in \eqref{eq:N0}, we have that
$$
\pi^{-1/2}\alpha_t^{3/2} a^2 S^{(1)}_{N+1}(\alpha_t) 
<  \tilde D_3 
<  \pi^{-1/2}\alpha_t^{3/2} a^2\Bigl[S^{(1)}_{N+1}(\alpha_t) + 2(N+1)^2\exp\{-\alpha_t(N+1)^2\}\Bigr]\ .
$$
Next, consider the sequence of partial sums and the remainder term as derived from the $\sin$ series defining both $g$ and $h$, see equations \eqref{eq:g2} and \eqref{eq:h2}: 
\begin{align*}
S^g_N(y) &\stackrel{\rm def}{=} \frac{2}{a} \sum_{n=1}^N  \sin\left( \frac{\pi n x}{a} \right)\sin\left( \frac{\pi n y}{a} \right) \exp\left\{ -\frac{\pi^2  n^2}{2a^2} t \right\},\\
R_N^g(y) &\stackrel{\rm def}{=} g(y;t,x,a) - S^g_N(y), \\ 
S^h_N(y) &\stackrel{\rm def}{=} K^h \sum_{n=1}^N n  \sin\left( \frac{\pi n y}{a} \right) \exp\left\{ -\frac{\pi^2  n^2}{2a^2} t \right\},\\
R_N^h(y) &\stackrel{\rm def}{=} h(y;t,a) - S^h_N(y),
\end{align*}
where $N\ge 1$. Then, using Proposition \ref{prop:h1} and the fact that $|\sin(x)| \le |x|$, we deduce quickly that
\begin{align*}
|R^g_N(y)| &\le \frac{2xy\pi^2}{a^3}\sum_{n=N+1}^\infty n^2\exp\{-\alpha_t n^2\} \le \frac{2xy\pi^2}{a^3} x^{(1)}_N(\alpha_t) \stackrel{\rm def}{=} x^g_N(y)\ ,\\
|R^h_N(y)| &\le K^h\frac{\pi y}{a}\sum_{n=N+1}^\infty n^2\exp\{-\alpha_t n^2\} \le K^h\frac{\pi y}{a} x^{(1)}_N(\alpha_t) \stackrel{\rm def}{=} x^h_N(y)\ .
\end{align*}
The next two results provide the remaining necessary ingredients.
\begin{proposition}
\label{prop:g2}
With $S^g_N$, $R^g_N$, and $x^g_N$ defined above, when $N > N_0(\alpha_t,\beta_t)$ and $y\in [0,a]$, we have that
\begin{enumerate}
    \item $S^g_N(y) + x^g_N(y) \ge g(y;t,x,a) \ge S^g_N(y)-x^g_N(y)$;
    \item $S^g_N(y) -x^g_N(y) \nearrow g(y;t,x,a)$ as $N\to \infty$;
    \item $S^g_N(y) + x^g_N(y) \searrow g(y;t,x,a)$ as $N\to \infty$.
\end{enumerate}
\end{proposition}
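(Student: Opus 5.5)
Part 1 follows from the tail bound on $R^g_N$ already displayed before the statement. I will check that bound carefully. By \eqref{eq:g2}, $R^g_N(y)=\frac{2}{a}\sum_{n>N}\sin(\pi n x/a)\sin(\pi n y/a)e^{-\alpha_t n^2}$. Using $|\sin u|\le |u|$ on both sine factors gives
\[
|R^g_N(y)|\le \frac{2\pi^2 xy}{a^3}\sum_{n>N} n^2 e^{-\alpha_t n^2}=\frac{2\pi^2 xy}{a^3}R^{(1)}_N(\alpha_t).
\]
Since $N>N_0(\alpha_t,\beta_t)$, Proposition~\ref{prop:h1} gives $R^{(1)}_N(\alpha_t)\le x^{(1)}_N(\alpha_t)$, so $|R^g_N(y)|\le x^g_N(y)$. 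Writing $g=S^g_N+R^g_N$ then yields $S^g_N-x^g_N\le g\le S^g_N+x^g_N$, which is item 1.

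For items 2 and 3, convergence is straightforward. We have $S^g_N(y)\to g(y;t,x,a)$ because the sine series converges absolutely. Also $x^g_N(y)\to0$, since $(N+1)^2e^{-\alpha_t(N+1)^2}\to0$. So both sequences converge to $g$.

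Monotonicity is the main obstacle. The terms of $S^g_N$ have no fixed sign, so $S^g_N\pm x^g_N$ need not be monotone for a bare sum.

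For the upper sequence, I will show $U_{N+1}\le U_N$, where $U_N=S^g_N+x^g_N$. Write $c=2\pi^2xy/a^3$ and $e_m=m^2e^{-\alpha_t m^2}$. The step satisfies
\[
U_{N+1}-U_N = T_{N+1}+c\bigl(x^{(1)}_{N+1}-x^{(1)}_N\bigr),
\]
where $T_{N+1}$ is the $(N+1)$-st term of the series. From the sine bound, $|T_{N+1}|\le c\,e_{N+1}$. The key identity is $x^{(1)}_N=e_{N+1}/(1-\rho)$. So it suffices to show
\[
e_{N+1}+\frac{e_{N+2}}{1-\rho}\le \frac{e_{N+1}}{1-\rho},
\quad\text{i.e.}\quad e_{N+2}\le \rho\, e_{N+1}.
\]
This ratio condition, $(N+2)^2e^{-\alpha_t(2N+3)}/(N+1)^2\le\rho$, is exactly what Proposition~\ref{prop:h1} relies on to obtain the geometric tail bound. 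The choice of $N_0$ (the $\log(4/\rho)/\alpha$ term, with $((N+2)/(N+1))^2\le 4$) guarantees it for $N>N_0$. The lower sequence is symmetric: $L_{N+1}-L_N=T_{N+1}+c(x^{(1)}_N-x^{(1)}_{N+1})\ge -c\,e_{N+1}+c(x^{(1)}_N-x^{(1)}_{N+1})\ge0$ by the same inequality.

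If the ratio inequality from $N_0$ is slightly off, I would fall back on extracting it from the proof of Proposition~\ref{prop:h1} in Appendix~\ref{proof:h1}. Failing that, I would enlarge $N_0$ by a constant.
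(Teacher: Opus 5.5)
Your proposal is correct and follows the paper's route. The paper proves Proposition~\ref{prop:h2} and says Proposition~\ref{prop:g2} follows line by line, using $|\sin u\,\sin v|\le uv$ and then $x^{(1)}_{N+1}\le\rho\,x^{(1)}_N$ to obtain $x^g_N-x^g_{N+1}\ge(1-\rho)x^g_N=c\,e_{N+1}\ge|T_{N+1}|$ — exactly your reduction to $e_{N+2}\le\rho\,e_{N+1}$, which your bound $((N+2)/(N+1))^2\le 4$ secures for $N>N_0$, so no fallback is needed.
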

\begin{proposition}
\label{prop:h2}
With $S^h_N$, $R^h_N$, and $x^h_N$ defined above, when $N > N_0(\alpha_t,\beta_t)$ and $y\in [0,a]$, we have that
\begin{enumerate}
    \item $S^h_N(y) + x^h_N(y) \ge h(y;t,a) \ge S^h_N(y)-x^h_N(y)$;
    \item $S^h_N(y) -x^h_N(y) \nearrow h(y;t,a)$ as $N\to \infty$;
    \item $S^h_N(y) + x^h_N(y) \searrow h(y;t,a)$ as $N\to \infty$.
\end{enumerate}
\end{proposition}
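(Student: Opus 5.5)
The plan is to get all three items of Proposition~\ref{prop:h2} from one per-term comparison, using the tail bound of Proposition~\ref{prop:h1} (with $\rho=1/2$, or any fixed $\rho\in(0,1)$). Throughout, $K^h>0$, $y\in[0,a]$, and $x^h_N(y)=K^h\frac{\pi y}{a}x^{(1)}_N(\alpha_t)\ge 0$. At $y=0$ (and at $y=a$) every quantity vanishes, so there is nothing to prove there. The proof has three steps.

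\textbf{Item 1 (two-sided bound).} This is the remainder estimate already displayed before the proposition. Since $|\sin u|\le|u|$, each term of $R^h_N(y)$ satisfies
\[
K^h\, n\,\bigl|\sin(\pi n y/a)\bigr|\, e^{-\alpha_t n^2}\le K^h\frac{\pi y}{a}\, n^2 e^{-\alpha_t n^2}.
\]
Summing over $n\ge N+1$ and applying the bound $R^{(1)}_N(\alpha_t)\le x^{(1)}_N(\alpha_t)$ from Proposition~\ref{prop:h1} gives $|R^h_N(y)|\le x^h_N(y)$. Because $h=S^h_N+R^h_N$, this is exactly item 1.

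\textbf{Items 2 and 3 (monotonicity).} I will check that
\[
\bigl(S^h_{N+1}-x^h_{N+1}\bigr)-\bigl(S^h_N-x^h_N\bigr)=t_{N+1}(y)+x^h_N(y)-x^h_{N+1}(y)\ge 0,
\]
where $t_{N+1}$ is the $(N+1)$-st term of the sine series. The same estimate gives the mirror statement $\bigl(S^h_{N+1}+x^h_{N+1}\bigr)-\bigl(S^h_N+x^h_N\bigr)\le 0$. Both reduce to showing $x^h_N-x^h_{N+1}\ge |t_{N+1}|$. Using $|t_{N+1}|\le K^h\frac{\pi y}{a}(N+1)^2e^{-\alpha_t(N+1)^2}$ and cancelling the common factor $K^h\pi y/a$, it suffices to have
\[
\frac{(N+1)^2e^{-\alpha_t(N+1)^2}-(N+2)^2e^{-\alpha_t(N+2)^2}}{1-\rho}\ge (N+1)^2e^{-\alpha_t(N+1)^2}.
\]
This is equivalent to
\[
\frac{(N+2)^2}{(N+1)^2}\,e^{-\alpha_t(2N+3)}\le\rho .
\]

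\textbf{Main obstacle: choosing $N_0$.} The key step is to verify this last inequality, and it is exactly where the threshold $N_0$ enters. The ratio $(N+2)^2/(N+1)^2$ is at most $4$. The condition $N>N_0(\alpha_t,\beta_t,\rho)$ gives $N>\tfrac12\bigl(\alpha_t^{-1}\log(4/\rho)-1\bigr)$, that is, $\alpha_t(2N+1)>\log(4/\rho)$. Hence $e^{-\alpha_t(2N+3)}<e^{-\alpha_t(2N+1)}<\rho/4$, and the inequality follows. I expect this to mirror the argument inside Proposition~\ref{prop:h1}, so I would reuse that computation. The only point needing care is that the monotonicity must hold for every $N>N_0$, not only in the limit; this is fine because the bound above is uniform in $N\ge N_0$.

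\textbf{Convergence.} The sine series converges to $h$, so $S^h_N(y)\to h(y;t,a)$. Also $x^{(1)}_N(\alpha_t)\to0$, so $x^h_N(y)\to0$. Combined with the monotonicity above, this gives the arrows $\nearrow$ and $\searrow$ in items 2 and 3.
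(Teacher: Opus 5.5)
Your proof is correct and follows the paper's argument. Item 1 comes from $|R^h_N|\le x^h_N$ via $|\sin u|\le|u|$ and Proposition~\ref{prop:h1}, and monotonicity comes from $x^h_{N+1}\le\rho\,x^h_N$ together with $|t_{N+1}|\le(1-\rho)\,x^h_N$; you check the first of these directly rather than quoting it from the proof of Proposition~\ref{prop:h1}. One small slip: at $y=a$ the bound $x^h_N(a)=K^h\pi\,x^{(1)}_N(\alpha_t)$ does not vanish, but your general argument never uses $y<a$, so it covers that endpoint anyway.
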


\begin{proof}
    See Appendix \ref{proof:h2}.
\end{proof}

The image representations provide alternative approximating sequences when
$\alpha_t$ is small. For integers $N\geq 0$, define
\begin{align}
\widetilde S_N^g(y)
&=\frac{1}{\sqrt{2\pi t}}
  \sum_{n=-N}^{N}
  \left[
  \exp\left\{-\frac{(y+2na-x)^2}{2t}\right\}
  -\exp\left\{-\frac{(y+2na+x)^2}{2t}\right\}
  \right],
\label{eq:image-partial-g}\\
\widetilde R_N^g(y)
&=g(y;t,x,a)-\widetilde S_N^g(y),
\notag\\
\widetilde S_N^h(y)
&=\sum_{n=-N}^{N}(2na+y)
  \exp\left\{-\frac{(2na+y)^2}{2t}\right\},
\label{eq:image-partial-h}\\
\widetilde R_N^h(y)
&=h(y;t,a)-\widetilde S_N^h(y).
\notag
\end{align}
Set
\begin{align}
\widetilde x_N^g(y)
&=\frac{
  \displaystyle
  \exp\left\{-\frac{[2(N+1)a-x-y]^2}{2t}\right\}}
  {\displaystyle
  \sqrt{2\pi t}\left[
  1-\exp\left\{-\frac{2a[(2N+3)a-x-y]}{t}\right\}
  \right]},
\label{eq:image-error-g}\\
\widetilde x_N^h(y)
&=[2(N+1)a-y]
  \exp\left\{-\frac{[2(N+1)a-y]^2}{2t}\right\},
\label{eq:image-error-h}
\end{align}
and define
\begin{equation}
\widetilde N_0(\beta_t)
=\max\left\{
0,\left\lceil
\frac{(2\beta_t)^{-1/2}-1}{2}
\right\rceil
\right\},
\qquad \beta_t=\frac{a^2}{2t}.
\label{eq:image-cutoff}
\end{equation}
The denominator in \eqref{eq:image-error-g} is strictly positive for
$0<x<a$, $y\in[0,a]$, and $N\geq0$.

\begin{proposition}\label{prop:image-g}
With $\widetilde S_N^g$, $\widetilde R_N^g$, and
$\widetilde x_N^g$ defined above, for $a,t>0$, $0<x<a$,
$y\in[0,a]$, and every integer $N\geq0$, we have that
\begin{enumerate}
\item
$\widetilde S_N^g(y)+\widetilde x_N^g(y)
\geq g(y;t,x,a)
\geq\widetilde S_N^g(y)-\widetilde x_N^g(y)$;
\item
$\widetilde S_N^g(y)-\widetilde x_N^g(y)
\nearrow g(y;t,x,a)$ as $N\to\infty$;
\item
$\widetilde S_N^g(y)+\widetilde x_N^g(y)
\searrow g(y;t,x,a)$ as $N\to\infty$.
\end{enumerate}
\end{proposition}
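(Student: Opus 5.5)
The plan is to control the tail of the image series by a single geometric majorant. That majorant gives item~1, and the same majorant telescopes in a way that gives the monotonicity in items~2--3.

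\textbf{Step 1: signs and sizes of the terms.} Write
\[
c_m=2ma-x-y ,\qquad e_m=\frac{1}{\sqrt{2\pi t}}\exp\{-c_m^2/(2t)\}, \qquad m\ge 1 .
\]
Since $0<x<a$ and $0\le y\le a$, we have $c_m\ge 2a-x-y>0$ for all $m\ge1$.

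For $n\ge1$, both arguments $y+2na\mp x$ are positive and $y+2na-x$ is the smaller one. Hence the $n$-th term is nonnegative. It is bounded by $e^{-(2na+y-x)^2/(2t)}/\sqrt{2\pi t}\le e_n$, because $2na+y-x\ge c_n\ge0$.

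For $n=-m\le -1$, the two arguments have absolute values $2ma+x-y$ and $c_m$, both nonnegative, with $c_m$ the smaller. Hence the term is nonpositive, with absolute value at most $e_m$.

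Consequently the combined $(\pm m)$-contribution $\tau_m$ (the sum of the $n=m$ and $n=-m$ terms) is a sum of two numbers of opposite sign, each of modulus $\le e_m$. So $|\tau_m|\le e_m$.

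More to the point, the tail is
\[
\widetilde R_N^g(y)=P_N-Q_N ,
\]
where $P_N\ge0$ collects the positive terms with $n\ge N+1$ and $Q_N\ge0$ collects the moduli of the negative terms with $n\le -(N+1)$. Each of $P_N,Q_N$ is at most $\sum_{m\ge N+1}e_m$. Therefore
\[
|\widetilde R_N^g(y)|\le\max(P_N,Q_N)\le\sum_{m\ge N+1}e_m .
\]

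\textbf{Step 2: geometric bound.} Compute the ratio of consecutive terms:
\[
\frac{e_{m+1}}{e_m}=\exp\{-(c_{m+1}^2-c_m^2)/(2t)\}=\exp\{-2a[(2m+1)a-x-y]/t\}=:r_m .
\]
This ratio is decreasing in $m$. Hence for $m\ge N+1$ we have $r_m\le r_{N+1}$, and $r_{N+1}=\exp\{-2a[(2N+3)a-x-y]/t\}$ is exactly the quantity in the denominator of \eqref{eq:image-error-g}. It is $<1$ because $(2N+3)a>x+y$. Summing the geometric series gives
\[
\sum_{m\ge N+1}e_m\le \frac{e_{N+1}}{1-r_{N+1}}=\widetilde x_N^g(y),
\]
which proves item~1.

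\textbf{Step 3: monotonicity, the only delicate point.} For item~2 we need
\[
\widetilde S^g_{N+1}-\widetilde x^g_{N+1}\ \ge\ \widetilde S^g_N-\widetilde x^g_N ,
\]
and for item~3 the reverse inequality with $+\widetilde x$. Since $\widetilde S^g_{N+1}-\widetilde S^g_N=\tau_{N+1}$, both inequalities follow from
\[
\widetilde x^g_N-\widetilde x^g_{N+1}\ \ge\ |\tau_{N+1}| .
\]
Using $e_{N+2}=r_{N+1}e_{N+1}$ and $r_{N+2}\le r_{N+1}$,
\[
\widetilde x^g_N-\widetilde x^g_{N+1}
=e_{N+1}\Bigl[\frac{1}{1-r_{N+1}}-\frac{r_{N+1}}{1-r_{N+2}}\Bigr]
\ge e_{N+1}\,\frac{1-r_{N+1}}{1-r_{N+1}}=e_{N+1}\ge|\tau_{N+1}| ,
\]
where the last inequality is Step~1.

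Finally, for convergence: $\widetilde x^g_N\to0$, since $e_{N+1}\to0$ and $1-r_{N+1}\ge 1-r_1>0$. Also $\widetilde S^g_N\to g(y;t,x,a)$ by absolute convergence of the image series. Hence both monotone sequences converge to $g(y;t,x,a)$, which completes items~2 and~3.

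The main obstacle I anticipate is Step~1's sign bookkeeping, including the boundary values $y\in\{0,a\}$. This is what allows the positive tail and the negative tail to be bounded by the same majorant $e_m$, rather than by their sum. Using the sum would double the constant and break the exact form of $\widetilde x_N^g$.
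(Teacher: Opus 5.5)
Your proposal is correct and follows the paper's proof essentially step for step. The paper also pairs the $n$ and $-n$ image terms to get $|\Delta_n^g(y)|\le b_n$ (your $e_n$), uses the decreasing ratio $r_n$ to get $\widetilde x_N^g(y)=b_{N+1}/(1-r_{N+1})$, and derives both monotonicity claims from $\widetilde x_N^g-\widetilde x_{N+1}^g\ge b_{N+1}\ge|\Delta_{N+1}^g|$. The one cosmetic difference is that the paper bounds the tail directly by $\sum_{n>N}|\Delta_n^g|\le\sum_{n>N}b_n$, so the $\max(P_N,Q_N)$ device you flag as the delicate point is not needed: $|\tau_m|\le e_m$ already avoids the factor of two.
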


\begin{proposition}\label{prop:image-h}
With $\widetilde S_N^h$, $\widetilde R_N^h$, and
$\widetilde x_N^h$ defined above, for $a,t>0$,
$y\in[0,a]$, and every integer
$N\geq\widetilde N_0(\beta_t)$, we have that
\begin{enumerate}
\item
$\widetilde S_N^h(y)+\widetilde x_N^h(y)
\geq h(y;t,a)
\geq\widetilde S_N^h(y)-\widetilde x_N^h(y)$;
\item
$\widetilde S_N^h(y)-\widetilde x_N^h(y)
\nearrow h(y;t,a)$ as $N\to\infty$;
\item
$\widetilde S_N^h(y)+\widetilde x_N^h(y)
\searrow h(y;t,a)$ as $N\to\infty$.
\end{enumerate}
In fact, the sharper upper bound
$h(y;t,a)\leq\widetilde S_N^h(y)$ holds, and
$\widetilde S_N^h(y)\searrow h(y;t,a)$.
\end{proposition}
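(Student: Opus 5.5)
Write $\phi(u)=u\exp\{-u^2/(2t)\}$, so that $h(y;t,a)=\sum_{n\in\mathbb Z}\phi(2na+y)$ and $\widetilde S_N^h(y)=\sum_{|n|\le N}\phi(2na+y)$. The plan is to use only two facts: $\phi$ is odd, and $\phi$ is nonnegative and decreasing on $[\sqrt t,\infty)$, since $\phi'(u)=(1-u^2/t)e^{-u^2/(2t)}$. The key step is to rewrite the remainder by pairing the index $n=m$ with $n=-m$ for each $m\ge N+1$. Oddness gives $\phi(-2ma+y)=-\phi(2ma-y)$, hence
\begin{equation*}
\widetilde R_N^h(y)=\sum_{m=N+1}^{\infty}\bigl[\phi(2ma+y)-\phi(2ma-y)\bigr].
\end{equation*}
This rearrangement is legitimate because the original series converges absolutely, owing to Gaussian decay.

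The cutoff $\widetilde N_0(\beta_t)$ is chosen exactly so that every argument above lies in the decreasing region of $\phi$. For $m\ge N+1$ and $y\in[0,a]$ we have $2ma-y\ge(2N+1)a$. Moreover, $(2N+1)a\ge\sqrt t$ is equivalent to $N\ge(\sqrt t/a-1)/2=((2\beta_t)^{-1/2}-1)/2$, which holds for $N\ge\widetilde N_0(\beta_t)$. Two consequences follow.

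\emph{Upper bound.} Since $2ma+y\ge 2ma-y\ge\sqrt t$, each bracket is $\le0$. Hence $\widetilde R_N^h\le0$, which is the sharper bound $h\le\widetilde S_N^h$, and a fortiori $h\le\widetilde S_N^h+\widetilde x_N^h$.

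\emph{Lower bound.} We have $2ma+y\le 2(m+1)a-y$, so monotonicity gives $\phi(2ma+y)\ge\phi(2(m+1)a-y)$. Each bracket is therefore at least $\phi(2(m+1)a-y)-\phi(2ma-y)$. Summing over $m\ge N+1$ telescopes, because $\phi(u)\to0$, to give $\widetilde R_N^h(y)\ge-\phi(2(N+1)a-y)=-\widetilde x_N^h(y)$. This proves item 1.

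For the monotone convergence, set $A=2(N+1)a$ and compute the increments directly.
\begin{itemize}
\item[(i)] $\widetilde S_{N+1}^h-\widetilde S_N^h=\phi(A+y)-\phi(A-y)\le0$, so $\widetilde S_N^h$ is decreasing; it converges to $h$ by convergence of the series.
\item[(ii)] The increment of $\widetilde S_N^h-\widetilde x_N^h$ equals $\phi(A+y)-\phi(A-y)-\phi(A+2a-y)+\phi(A-y)=\phi(A+y)-\phi(A+2a-y)$. This is $\ge0$ because $A+y\le A+2a-y$ and both arguments are $\ge\sqrt t$.
\item[(iii)] The increment of $\widetilde S_N^h+\widetilde x_N^h$ equals $\phi(A+y)+\phi(A+2a-y)-2\phi(A-y)$. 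This is $\le0$ because each of the first two terms is at most $\phi(A-y)$.
\end{itemize}
Finally, $\widetilde x_N^h(y)=\phi(2(N+1)a-y)\to0$, so both bracketing sequences converge to $h(y;t,a)$. This gives items 2 and 3.

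The only delicate point is the pairing and sign bookkeeping for the negative indices, together with checking that $N\ge\widetilde N_0$ really places every argument $2ma\pm y$ ($m\ge N+1$) and $A+2a-y$ in $[\sqrt t,\infty)$. Once that is in place, everything reduces to the monotonicity of $\phi$. The endpoint cases $y=0$ and $y=a$ need no separate treatment, since the inequalities used are non-strict.
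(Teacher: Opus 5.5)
Your proof is correct and takes essentially the same route as the paper. Like the paper, you pair indices $n$ and $-n$ using the oddness of $u e^{-u^2/(2t)}$, note that $N\ge\widetilde N_0(\beta_t)$ forces $(2N+1)a\ge\sqrt t$ so that every tail argument lies where this function is decreasing, and telescope against its value at $2(n+1)a-y$. The only difference is cosmetic: you compute the increments of $\widetilde S_N^h\pm\widetilde x_N^h$ explicitly, whereas the paper derives them from the single inequality $\widetilde x_N^h-\widetilde x_{N+1}^h\ge|\widetilde S_{N+1}^h-\widetilde S_N^h|$.
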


\begin{proof}
See Appendix~\ref{app:image-bounds}.
\end{proof}

In particular, $\widetilde N_0(\beta_t)=0$ whenever
$\beta_t\geq1/2$. The remainder bounds decrease rapidly with $N$ when
$\alpha_t$ is small, since $\beta_t=\pi^2/(4\alpha_t)$ and
\begin{align}
\widetilde x_N^g(y)
&\leq
\frac{\exp\{-4\beta_tN^2\}}
{\sqrt{2\pi t}\,[1-\exp\{-4\beta_t(2N+1)\}]},
\label{eq:image-uniform-g}\\
\widetilde x_N^h(y)
&\leq 2(N+1)a\exp\{-\beta_t(2N+1)^2\}.
\label{eq:image-uniform-h}
\end{align}
These inequalities hold uniformly in $y\in[0,a]$; the first also
holds uniformly in $x\in(0,a)$.

The spectral and image representations are both valid for every
$\alpha_t>0$, subject to their respective starting-index conditions.
Their relative efficiency can be assessed by comparing the remainder
bounds. For fixed $a,t>0$ and interior spatial arguments, as $N\to\infty$, and $k\in \{g,h\}$, we have
\begin{align*}
\log x_N^k(y)
&=-\alpha_tN^2+O(N+\log N),
\\
\log\widetilde x_N^k(y)
&=-\frac{\pi^2}{\alpha_t}N^2+O(N+\log N).
\end{align*}
The second relation follows from the spacing $2a$ between consecutive
images and the identity $4\beta_t=\pi^2/\alpha_t$. Thus, at a common
truncation index, the image bounds have faster asymptotic decay when
$\alpha_t<\pi$, whereas the spectral bounds have faster asymptotic
decay when $\alpha_t>\pi$. A natural switching rule is therefore to
use the image representation for $\alpha_t<\pi$ and the spectral
representation for $\alpha_t\geq\pi$.

This cutoff provides an asymptotic guide to the choice of representation.
The most economical choice for a particular evaluation also depends on
the spatial arguments, the coefficients in the remainder bounds, and
the cost of evaluating an additional image pair or spectral term.
Either representation yields the same exact comparison through its monotone bounds, so the switching rule affects computational efficiency without affecting the validity of the sampling procedure.

Our simulation results are presented in Figure \ref{fig:draws:h}, where we display histograms of $10,000$ draws from $h$ based on Algorithms \ref{alg:draw:h1} and \ref{alg:draw:h2}, kernel density estimates (blue curves) and theoretical densities (red curves) for two cases: $a=2, t=0.2$ (left) and $a=2, t=1$ (right).

\begin{figure}[tbp]
    \centering
    \includegraphics[width=\linewidth]{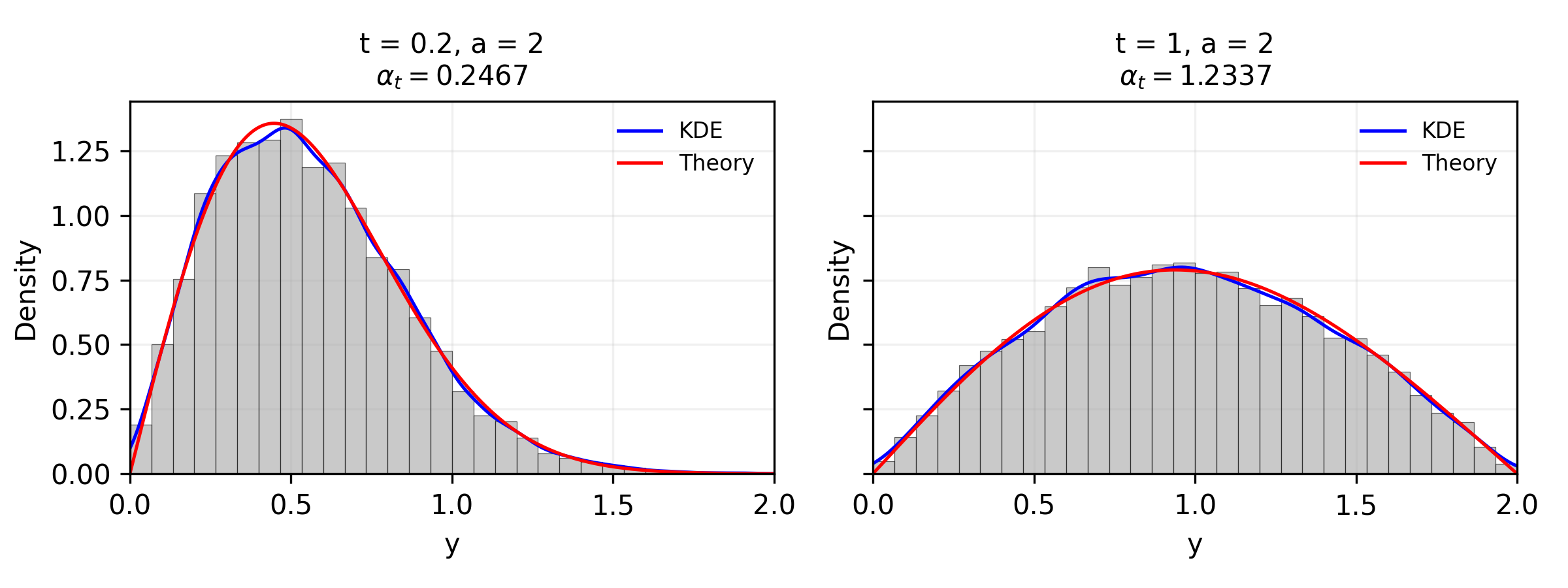}
    \caption{Histograms (grey) of $10,000$ draws from the density proportional to $h(\cdot;t,a)$, with kernel density estimates (blue curves) and numerically normalized theoretical densities (red curves). In the left panel we set $t=0.2, a=2$, while in the right panel we set $t=1, a=2$.}
    \label{fig:draws:h}
\end{figure}

\subsection{Interpolation under survival}
\label{sec:interp}

We now consider our second goal, that is, to develop an exact sampling algorithm for the variate $W_t^{x\to z}$, conditionally on the event $B$. For simplicity, let $Y_t^{x\to z} \eqd W_t^{x\to z}\mid B$. In short, we wish to sample a Brownian bridge from $x\in [0,a]$ to $z\in [0,a]$, at time $t\in (0,T)$, conditionally that it never leaves the interval $[0,a]$. We distinguish three cases: (i) $0<x,z<a$, (ii) $0=x<z<a$ and (iii) $0=x<z=a$. The remaining scenarios can be reduced to one of these three situations via symmetry, or are straight forward extensions of the tools and techniques developed below.

\textbf{Case I}: $0<x,z<a$. Using Proposition \ref{prop:markov}, the un-normalized density of interest is then, 
\begin{align*}
\PP(Y_t^{x\to z}\in dy) &\propto k^*_{(1)}(y)\,dy
\stackrel{\rm def}{=} g(y;t,x,a)\times g(y;T-t,z,a)\,dy, \quad y\in (0,a)\ .
\end{align*}
Using the constants $C_g^{(1)}$ and $C_g^{(2)}$ -- provided in Proposition \ref{prop:bound:g}, we consider the following two cases.
\begin{enumerate}
\item If $a/\sqrt{T-t} < 2$, we use the bound provided in \eqref{eq:bound:g:1} and write
$$
k^*_{(1)}(y) \le g(y;t,x,a) C^{(1)}_g(T-t,z,a)\ .
$$
Thus, a rejection sampling algorithm  will propose $Y\sim g(y;t,x,a)$ and accept this proposal as a draw from $k^*_{(1)}$ with probability
$$
\frac{k^*_{(1)}(Y)}{g(Y,t,x,a))C^{(1)}_g(T-t,z,a)} = \frac{g(Y;T-t,z,a)}{C^{(1)}_g(T-t,z,a)}\ .
$$

\item If $a/\sqrt{T-t}\ge 2$, we use the bound provided in \eqref{eq:bound:g:2}. It follows that
$$
k^*_{(1)}(y)\le g(y;t,x,a)C_g^{(2)}(T-t,z,a)
$$
and a proposed value $Y\sim g(y;t,x,a)$ is accepted as a draw from $k^*_{(1)}$ with probability
$$
\frac{k^*_{(1)}(Y)}{g(Y;t,x,a)C^{(2)}(T-t,z,a)} = \frac{g(Y;T-t,z,a)}{C_g^{(2)}(T-t,z,a)}\ .
$$
\end{enumerate}
We combine these two cases and present the complete sampling procedure in Algorithm \ref{alg:interp:I}. This technique can be extended to simulate any exact discrete path $(Y_{t_1}^{x\to z}, Y_{t_2}^{x\to z}, \ldots, Y_{t_n}^{x\to z})$, where $0=t_0<t_1<\ldots<t_n< t_{n+1}=T$ are given. Using the Markov property, this can be achieved sequentially. Set $Y_{t_0}^{x\to z}=x$, $Y_{t_{n+1}}^{x\to z} = z$. For $i=1,\ldots, n$, simulate $Y_{t_i}^{x\to z}$ using Algorithm \ref{alg:interp:I} with input parameters
$$
\Big(\ t_{i} - t_{i-1},\, T-t_{i-1},\, Y_{t_{i-1}}^{x\to z},\, Y_{t_{n+1}}^{x\to z},\, a\Big)\ .
$$

\begin{algorithm}[tbp]
  \caption{Exact simulation of $Y_t^{x\to z}$, $0<x,z<a$ }
  \label{alg:interp:I}
  \begin{algorithmic}[1]
    \STATE Input $t,T,x,z,a$; 
    \STATE If $a/\sqrt{T-t}< 2$ set $C = C^{(1)}_g(T-t,z,a)$. Else, set $C = C^{(2)}_g(T-t,z,a)$;
    \STATE Simulate $Y\sim g(y;t,x,a)$; Simulate $\sU$;
    \STATE If $\sU\times C\lerh g(Y;T-t,z,a)$, accept. Else, reject and return to STEP 3. 
    \STATE Output $Y$.
  \end{algorithmic}
\end{algorithm}

Our simulation results are presented in Figure \ref{fig:interp:I}. The upper two panels present histograms (grey), kernel density estimates (blue) and theoretical densities (red) for the marginal distributions of $Y_{0.2}^{0.1\to 1.8}$ and $Y_{2.8}^{0.1\to 1.8}$, where we set $a=2$ and $T=3$. These histograms are based on $10,000$ joint draws of $\big(Y_{0.2}^{0.1\to 1.8}, Y_{2.8}^{0.1\to 1.8}\big)$, generated as described above. The lower panel shows five separately simulated constrained Brownian bridge paths (various colors) from $x=0.1$ to $z=1.8$, with colored dots at $t=0.2$ and $t=2.8$ and black dots at the fixed endpoints. The sideways histograms and kernel density estimates use the same joint draws as the upper panels, with theoretical densities (red) superimposed.

\begin{figure}[tbp]
    \centering
    \includegraphics[width=0.95\linewidth]{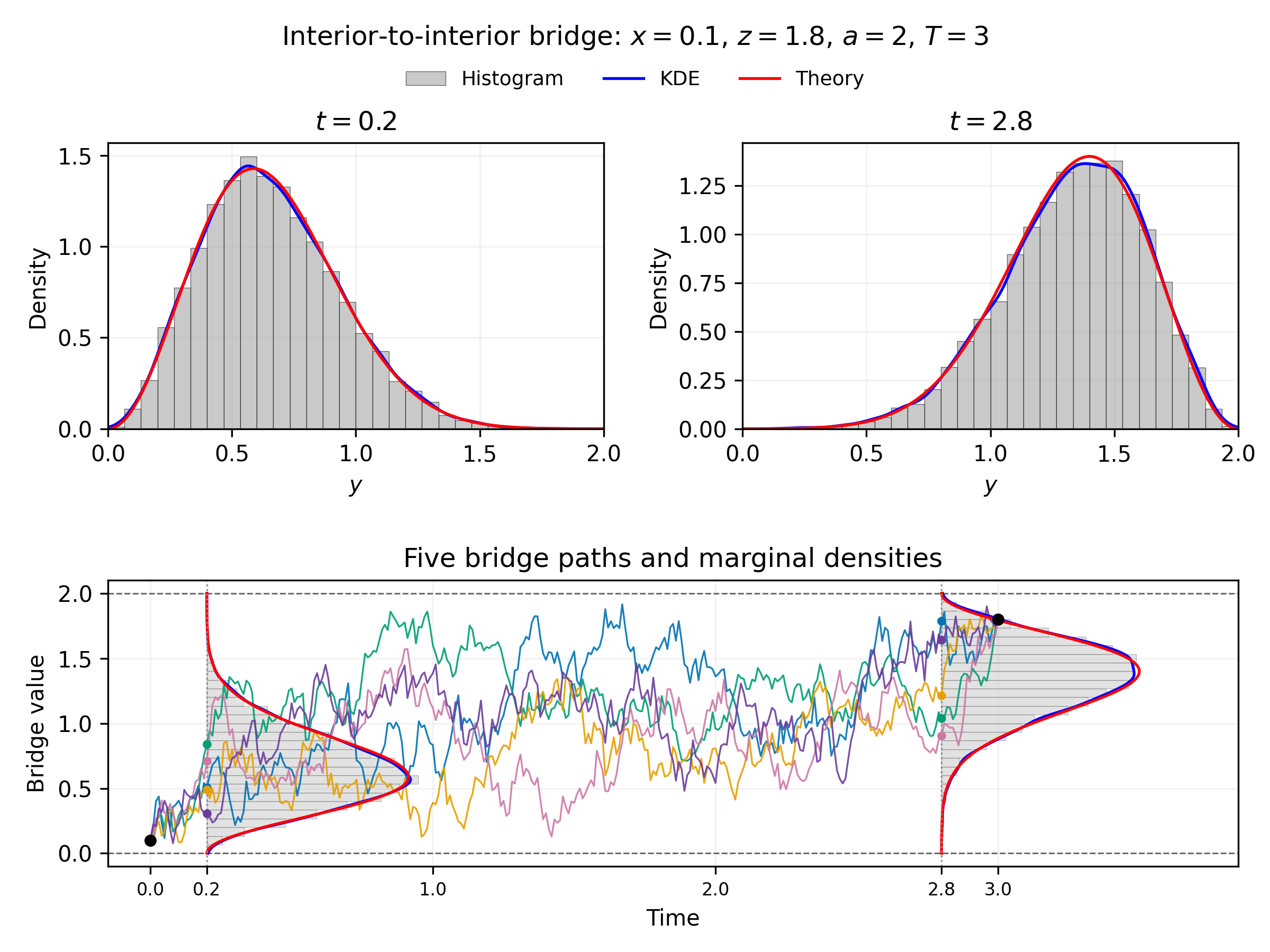}
    \caption{An illustration of Algorithm \ref{alg:interp:I}. The global parameters are set to $a=2, T=3, x=0.1, z=1.8$. Upper panels: marginal histograms (grey) based on $10,000$ joint draws of $(Y_{0.2}^{x\to z}, Y_{2.8}^{x\to z})$, kernel density estimates (blue) and theoretical densities (red), with $t=0.2$ on the left and $t=2.8$ on the right. The lower panel shows five separately simulated constrained Brownian bridge paths (various colors), with black dots marking the common start and end points, $(0,x)$ and $(T,z)$. The sideways marginal histograms (grey) use the same $10,000$ joint draws as the upper panels, with kernel density estimates (blue) and theoretical densities (red) superimposed.}
    \label{fig:interp:I}
\end{figure}

\noindent
\textbf{Case II}: $0=x < z < a$. Repeating the argument from Section \ref{sec:extrap} (Case II, $x=0$), we find that the density of $Y_t^{0\to z}$ is then proportional to
\begin{align}
    \PP(Y_t^{0\to z}\in dy) \propto k^*_{(2)}(y)\,dy
    \stackrel{\rm def}{=} h(y;t,a)\times g(y;T-t,z,a)\,dy\quad y\in (0,a).
\end{align}
Using a similar argument as above, a draw $Y\sim h(y;t,a)$ is accepted with probability
$$
\frac{k^*_{(2)}(Y)}{h(Y;t,a)\times C} 
= \frac{g(Y;T-t,z,a)}{C}\ ,
$$
where we set $C = C_g^{(1)}(T-t,z,a)$ if $a/\sqrt{T-t}<2$ and $C = C_g^{(2)}(T-t,z,a)$ if $a/\sqrt{T-t}\ge 2$. The complete procedure is given in Algorithm \ref{alg:interp:II}. For any set of time points $0=t_0 < t_1 < \ldots <t_n <t_{n+1} =T$, the exact discrete path $\big(Y_{t_{1}}^{0\to z}, Y_{t_2}^{0\to z}, \ldots, Y_{t_n}^{0\to z}\big)$ can be simulated jointly in the following sequential way. Set $Y_{t_0}^{0\to z} = 0$, $Y_{t_{n+1}}^{0\to z} = z$. Simulate $Y_{t_1}^{0\to z}$ using Algorithm \ref{alg:interp:II} with inputs $(t_1, T, z, a)$. For $i=2,\ldots, n$, simulate $Y_{t_i}^{0\to z}$ using Algorithm \ref{alg:interp:I} with inputs
$$
\left( t_i - t_{i-1},\ T - t_{i-1},\ Y_{t_{i-1}}^{0\to z},\  Y_{t_{n+1}}^{0\to z},\,a \right)\ .
$$

\begin{algorithm}[tbp]
  \caption{Exact Simulation of $Y_t^{0\to z}$.}
  \label{alg:interp:II}
  \begin{algorithmic}[1]
    \STATE Input $t,T,z,a$; 
    \STATE If $a/\sqrt{T-t}< 2$ set $C = C^{(1)}_g(T-t,z,a)$. Else, set $C = C^{(2)}_g(T-t,z,a)$;
    \STATE Simulate $Y\sim h(y;t,a)$; Simulate $\sU$;
    \STATE If $\sU\times C\lerh g(Y;T-t,z,a)$, accept. Else, reject and return to STEP 3. 
    \STATE Output $Y$.
  \end{algorithmic}
\end{algorithm}

We present the results of our simulations in Figure \ref{fig:interp:II}. We set $a=2, T=3, x=0, z=1.8$. The upper panels show marginal histograms (grey) based on $10,000$ joint draws of $(Y_{0.2}^{0\to z}, Y_{2.8}^{0\to z})$, kernel density estimates (blue) and theoretical densities (red), with $t=0.2$ on the left and $t=2.8$ on the right. The lower panel shows five separately simulated constrained Brownian bridge paths from $x=0$ to $z=1.8$, with colored dots at $t=0.2$ and $t=2.8$ and black dots at the fixed endpoints. The sideways histograms and kernel density estimates use the same joint draws as the upper panels, with theoretical densities (red) superimposed.

\begin{figure}[tbp]
    \centering
    \includegraphics[width=0.95\linewidth]{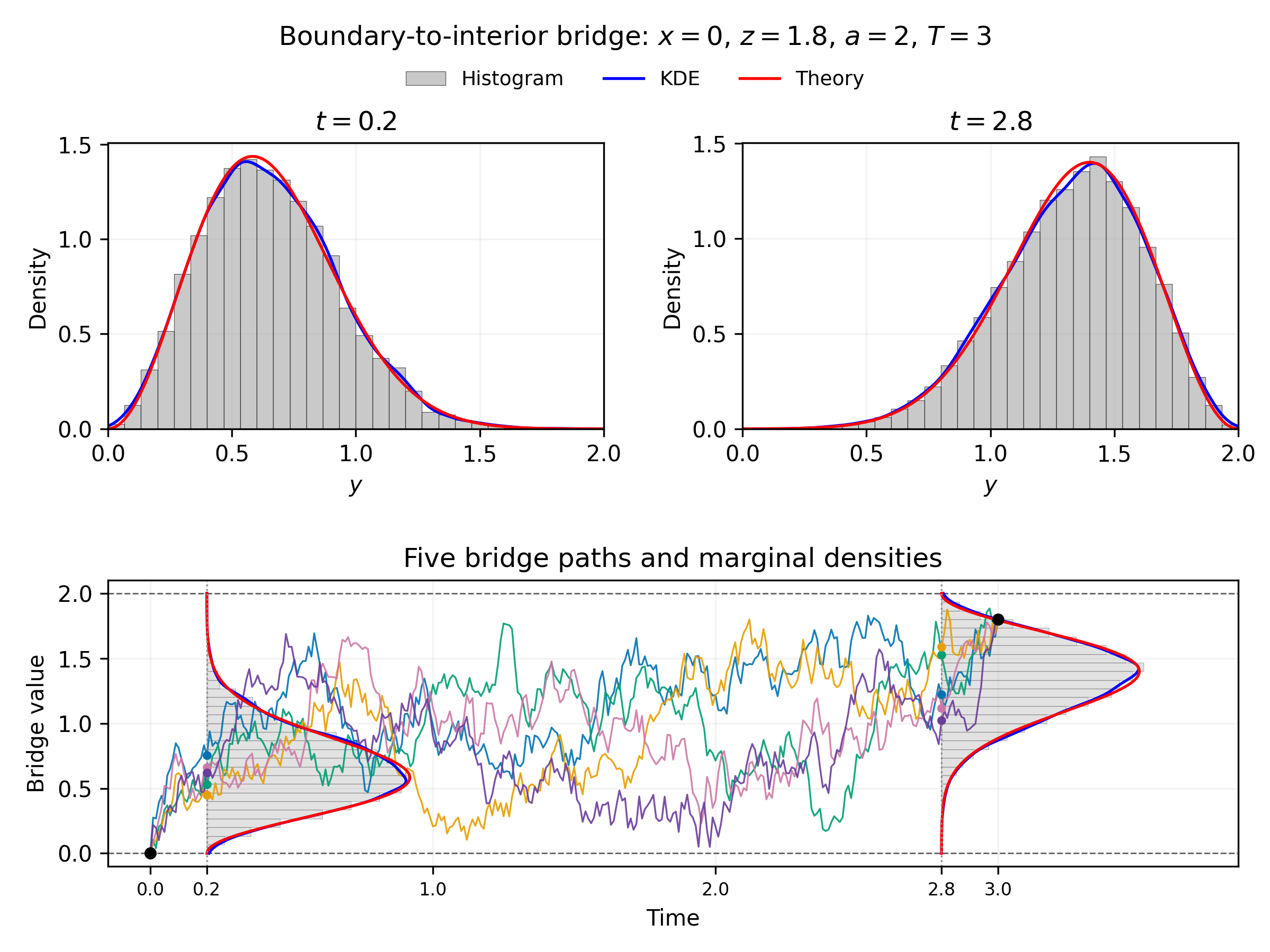}
    \caption{An illustration of Algorithm \ref{alg:interp:II}. The global parameters are set to $a=2, T=3, x=0, z=1.8$. Upper panels: marginal histograms (grey) based on $10,000$ joint draws of $(Y_{0.2}^{x\to z}, Y_{2.8}^{x\to z})$, kernel density estimates (blue) and theoretical densities (red), with $t=0.2$ on the left and $t=2.8$ on the right. The lower panel shows five separately simulated constrained Brownian bridge paths (various colors), with black dots marking the common start and end points, $(0,x)$ and $(T,z)$. The sideways marginal histograms (grey) use the same $10,000$ joint draws as the upper panels, with kernel density estimates (blue) and theoretical densities (red) superimposed.}
    \label{fig:interp:II}
\end{figure}

Before moving on the next case we note that the scenario when $0<x<z=a$ can be incorporated in Case II via a simple symmetry argument. Thus, if $Y_t^{x\to a} \eqd W_t^{x\to a}\mid B$ then, for $y\in (0,a)$ we have
$$
\PP(Y_t^{x\to a}\in dy) \propto k_{(3)}^*(y)\,dy
\stackrel{\rm def}{=}g(y;t,x,a)\times h(a-y;T-t,a)\ dy,\quad y\in (0,a)\ .
$$
As such, a very slight modification of Algorithm \ref{alg:interp:II} is sufficient and the full details are given in Algorithm \ref{alg:interp:III}. The results of our simulations for this case are displayed in Figure \ref{fig:interp:III}, where the information displayed is consistent with our previous simulations.

\begin{algorithm}[tbp]
  \caption{Exact Simulation of $Y_t^{x\to a}$.}
  \label{alg:interp:III}
  \begin{algorithmic}[1]
    \STATE Input $t,T,x,a$; 
    \STATE If $a/\sqrt{t}< 2$ set $C = C^{(1)}_g(t,x,a)$. Else, set $C = C^{(2)}_g(t,x,a)$;
    \STATE Simulate $Y\sim h(y;T-t,a)$; Set $Y:=a- Y$. Simulate $\sU$;
    \STATE If $\sU\times C\lerh g(Y;t,x,a)$, accept. Else, reject and return to STEP 3. 
    \STATE Output $Y$.
  \end{algorithmic}
\end{algorithm}

\begin{figure}[tbp]
    \centering
    \includegraphics[width=0.95\linewidth]{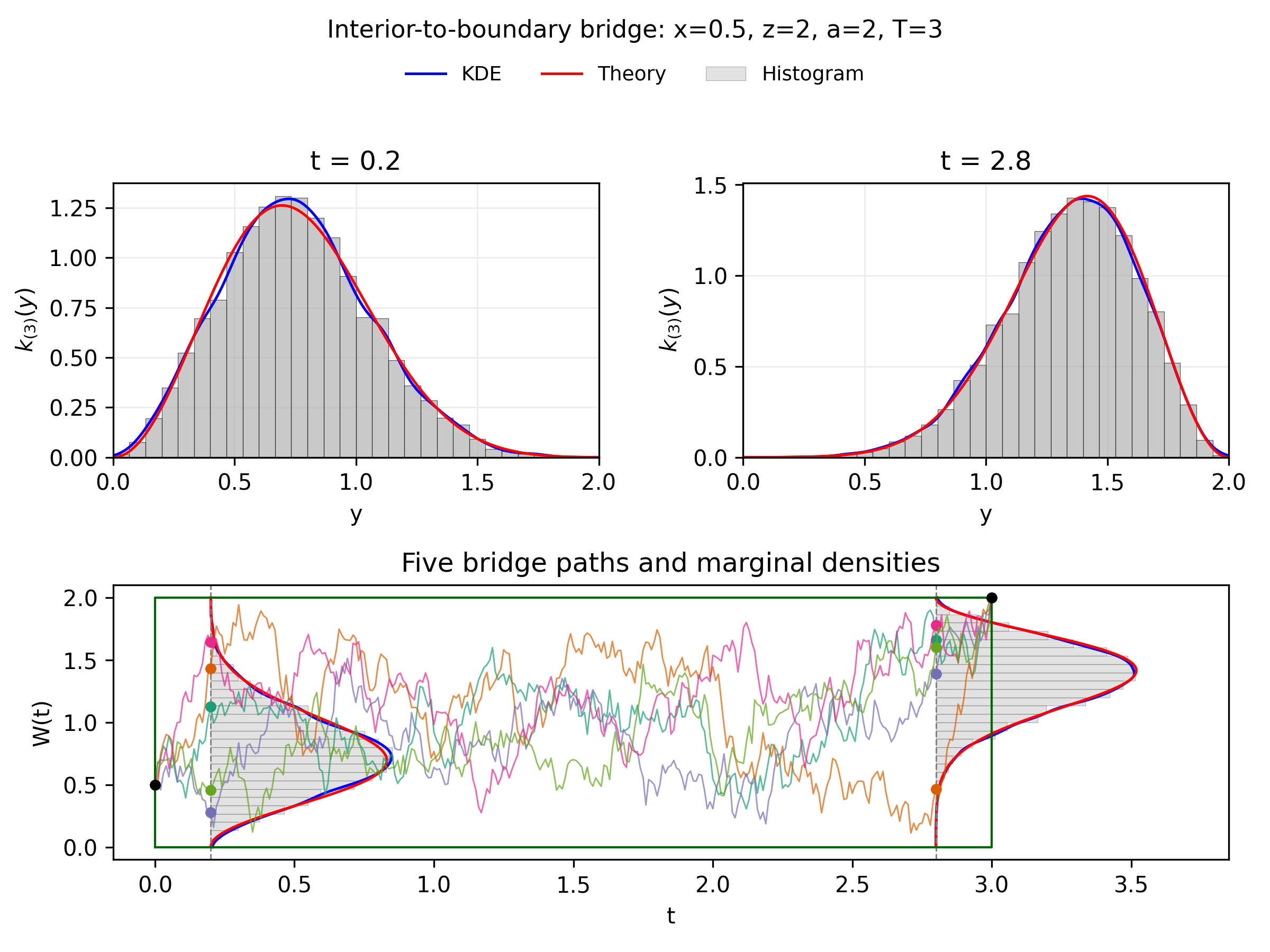}
    \caption{An illustration of Algorithm \ref{alg:interp:III}. The global parameters are set to $a=2, T=3, x=0.5, z=a$. Upper panels: marginal histograms (grey) based on $10,000$ joint draws of $(Y_{0.2}^{x\to z}, Y_{2.8}^{x\to z})$, kernel density estimates (blue) and theoretical densities (red), with $t=0.2$ on the left and $t=2.8$ on the right. The lower panel shows five separately simulated constrained Brownian bridge paths (various colors), with black dots marking the common start and end points, $(0,x)$ and $(T,z)$. The sideways marginal histograms (grey) use the same $10,000$ joint draws as the upper panels, with kernel density estimates (blue) and theoretical densities (red) superimposed.}
    \label{fig:interp:III}
\end{figure}

\noindent\textbf{Case III:} $0=x < z=a$. The argument used in Section \ref{sec:extrap} (Case II, $x=0$) can now be extended by letting $z\to a$. The density of $Y^{0\to a}_t$ is then proportional to 
$$
\PP(Y_t^{0\to a}\in dy) \propto k^*_{(4)}(y)\,dy = h(y;t,a)\times h(a-y;T-t,a)\,dy,\quad y\in (0,a)\ .
$$
Next, we develop an upper bound for $h$, to be used in our rejection sampling strategy. Combining the series bounds established in \eqref{eq:bound S1} and \eqref{eq:bound:S2} with the bounds on $h$ from \eqref{eq:bound:h1} and \eqref{eq:bound:h2}, we find that
\begin{enumerate}
    \item If $\alpha_t <\alpha_0$, we will use
    \begin{align*}
    h(y;t,a)&\le \sup_{y\in [0,a]}y\exp\left\{-\frac{y^2}{2t}\right\}  + 
    a \left[ \sum_{n=1}^{N_0+1} \exp\left\{-\beta_t n^2\right\}  + \exp\left\{-\beta_t (N_0+1)^2\right\}\right] \\
    & \stackrel{\rm def}{=}D^{(1)}_h(t,a)
    \end{align*}
    \item If $\alpha_t \ge \alpha_0$, then we will use
    \begin{align*}
    h(y;t,a)&\le 4\pi^{-1/2}\alpha_t^{3/2} \left[ \sum_{n=1}^{N_0+1} n^2\exp\left\{-\alpha_t n^2\right\} 
    +  (N_0+1)^2\exp\left\{-\alpha_t (N_0+1)^2\right\} \right]\frac{a}{2} \\
    &\stackrel{\rm def}{=}D^{(2)}_h(t, a)
    \end{align*}
\end{enumerate}
where we explicitly indicate that the bounds depend on $t$ (and $a$), which in turn will determine $\alpha_t= \pi^2 t/(2a^2)$ and $\beta_t=a^2/(2t)$. The value $N_0 = N_0(\alpha_t, \beta_t)$ is given in \eqref{eq:N0}. The supremum above can be found via standard calculus:
$$
\sup_{y\in [0,a]}y\exp\left\{-\frac{y^2}{2t}\right\} = \begin{cases}
    a\exp\{-a^2/(2t)\} & \mbox{ if } a < \sqrt{t} \\
    \sqrt{t}\exp\{-1/2\} & \mbox{ if } a \ge \sqrt{t}
\end{cases}
\ .
$$
We now have all the ingredients to describe the exact sampling procedure for $k^*_{(4)}$, which is given in Algorithm \ref{alg:interp:IV}. For any set of time points $0=t_0 < t_1 < \ldots <t_n <t_{n+1} =T$, an exact discretized path $\big(Y_{t_{1}}^{0\to a}, Y_{t_2}^{0\to a}, \ldots, Y_{t_n}^{0\to a}\big)$ can be simulated jointly in the following sequential way. Set $Y_{t_0}^{0\to a} = 0$, $Y_{t_{n+1}}^{0\to a} = a$. The variate $Y_{t_1}^{0\to a}$ is simulated using Algorithm \ref{alg:interp:IV} with inputs $t_1, T, a$. For $i=2,\ldots, n$, simulate $Y_{t_i}^{0\to a}$ using Algorithm \ref{alg:interp:III} with inputs
$$
\left( t_i - t_{i-1},\ T - t_{i-1},\ Y_{t_{i-1}}^{0\to a}, \,a \right)\ .
$$
The results of our simulations are displayed in Figure \ref{fig:interp:IV}, consistent with our previous visualizations. The upper panels show marginal histograms (grey) based on $10,000$ joint draws of $(Y_{0.2}^{0\to a}, Y_{2.8}^{0\to a})$, kernel density estimates (blue) and theoretical densities (red), with $t=0.2$ on the left and $t=2.8$ on the right. The lower panel shows five separately simulated constrained Brownian bridge paths from $x=0$ to $z=a$, with colored dots at $t=0.2$ and $t=2.8$ and black dots at the fixed endpoints. The sideways histograms and kernel density estimates use the same joint draws as the upper panels, with theoretical densities (red) superimposed.

\begin{algorithm}[tbp]
  \caption{Exact Simulation of $Y_t^{0\to a}$.}
  \label{alg:interp:IV}
  \begin{algorithmic}[1]
    \STATE Input $t,T,a$;  
    \STATE Set $\alpha_0 = 1.086165$; Evaluate $\alpha_{T-t}$, $\beta_{T-t}$, $N_0(\alpha_{T-t}, \beta_{T-t})$;
    \STATE If $\alpha_{T-t} < \alpha_0$ set $D = D^{(1)}_h(T-t,a)$. Else, set $D = D^{(2)}_h(T-t,a)$;
    \STATE Simulate $Y\sim h(y;t,a)$; Simulate $\sU$;
    \STATE If $\sU\times D\lerh h(a-Y;T-t,a)$, accept. Else, reject and return to STEP 3. 
    \STATE Output $Y$.
  \end{algorithmic}
\end{algorithm}

\begin{figure}[tbp]
    \centering
    \includegraphics[width=0.95\linewidth]{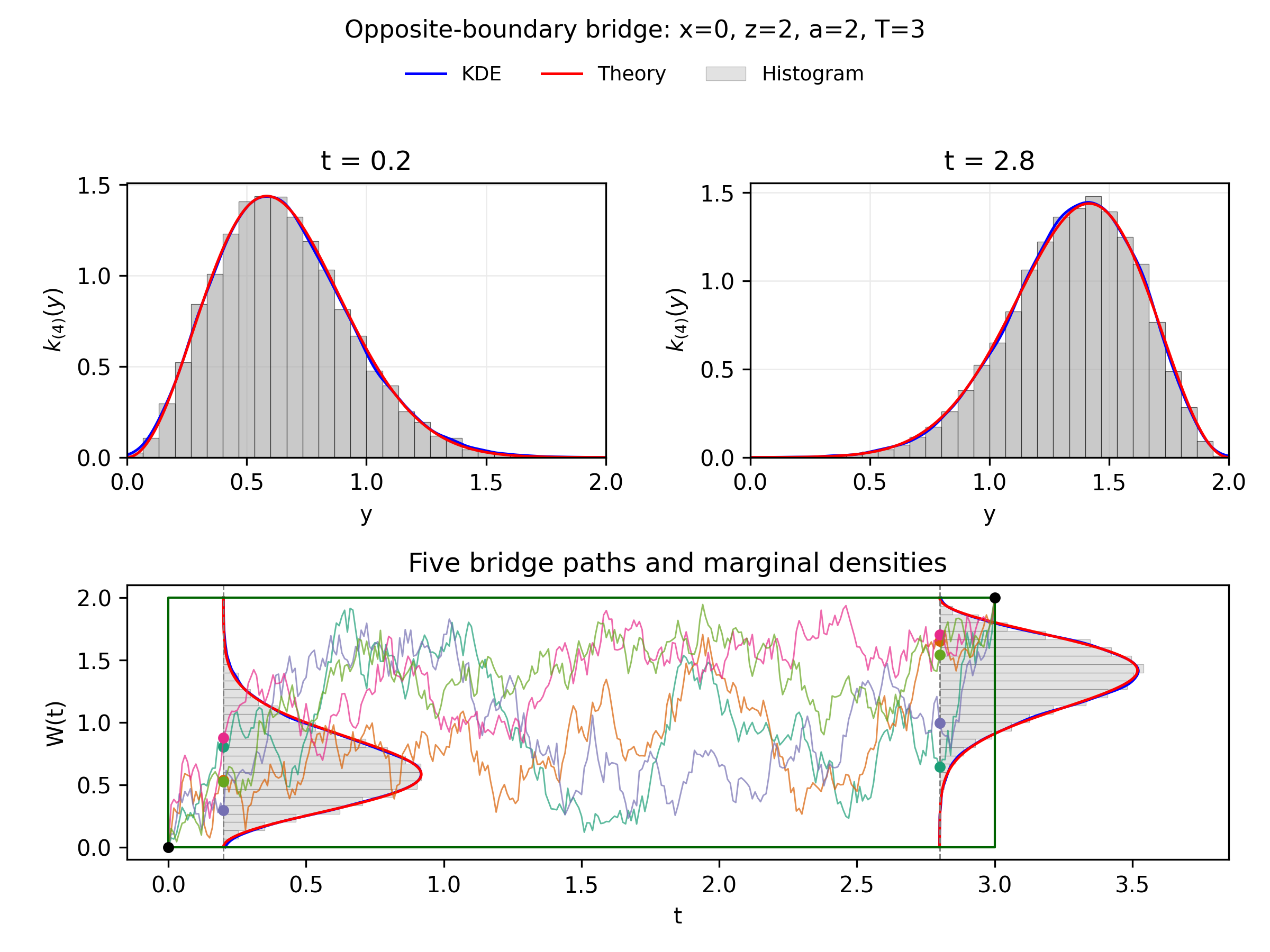}
    \caption{An illustration of Algorithm \ref{alg:interp:IV}. The global parameters are set to $a=2, T=3, x=0, z=a$. Upper panels: marginal histograms (grey) based on $10,000$ joint draws of $(Y_{0.2}^{x\to z}, Y_{2.8}^{x\to z})$, kernel density estimates (blue) and theoretical densities (red), with $t=0.2$ on the left and $t=2.8$ on the right. The lower panel shows five separately simulated constrained Brownian bridge paths (various colors), with black dots marking the common start and end points, $(0,x)$ and $(T,z)$. The sideways marginal histograms (grey) use the same $10,000$ joint draws as the upper panels, with kernel density estimates (blue) and theoretical densities (red) superimposed.}
    \label{fig:interp:IV}
\end{figure}

We end this section with the note that the two ``same-boundary'' cases, namely $0\to 0$ and $a\to a$ are straightforward varaints of Algorithm \ref{alg:interp:IV}, and we opt to omit the details.

\subsection{Some computational remarks}

From a computing perspective, at the core of our algorithms are the two series discussed in Proposition \ref{prop:h1}. Both series are absolutely convergent and tight bounds on the remainder terms are established. In particular, these bounds are what make the series comparisons in the accept/reject steps implementable
via monotone upper and lower partial sums, and they also provide a simple way to choose a truncation level when a numerical evaluation is required to a prescribed tolerance.

Specifically, the remainder terms are upper bounded by
$$
x^{(1)}_N(\alpha)=2(N+1)^2 \exp\{-\alpha(N+1)^2\}
\qquad \mbox{ and }\qquad
x^{(2)}_N(\beta)=2 \exp\{-\beta(N+1)^2\}\ ,
$$
where we used $\rho=1/2$. A simple analysis yields the following. Using $e^x \ge x$ for $x\ge 0$, with $x=\alpha(N+1)^2/2$ we find
\[
2(N+1)^2 \exp\{-\alpha(N+1)^2\} \le \frac{4}{\alpha}\exp\{-\alpha(N+1)^2/2\}.
\]
Thus, for a given tolerance $\texttt{tol}$, we find that
\[
N > \left\lceil \sqrt{\frac{2}{\alpha}\log\!\left(\frac{4}{\alpha\times \texttt{tol}}\right)} \right\rceil -1
\ \Rightarrow\ x^{(1)}_N(\alpha) < \texttt{tol}.
\]
Similarly,
\[
N > \left\lceil \sqrt{\frac{1}{\beta}\log\!\left(\frac{2}{\mathrm{tol}}\right)} \right\rceil -1
\ \Rightarrow\ x^{(2)}_N(\beta) < \texttt{tol}.
\]

Figure \ref{fig:tol} depicts the continuous square-root expressions in these conditions (on a $\log_{10}$ scale), before integer rounding, for $\texttt{tol}=10^{-16}$. Each parameter ranges independently from $10^{-3}$ to $10$, with the $\beta$ axis reversed for display. In our application, the parameters $(\alpha_t,\beta_t)$ satisfy $\beta_t=\pi^2/(4\alpha_t)$; the two axes in the figure do not represent this reciprocal pairing.
At least one of these quantities is large, and in combination with the regime-switching used throughout Section \ref{sec:sampling} this leads to rapid
convergence in practice. The conditions above should be interpreted as sufficient (and typically
conservative) guarantees that the tail is below $\texttt{tol}$; in the series method, the accept/reject decision is often resolved after fewer terms. The required $N$ grows only in the small-$\alpha$ (resp.\ small-$\beta$) regime, roughly like
$\sqrt{\alpha^{-1}\log(1/\texttt{tol})}$ (resp.\ $\sqrt{\beta^{-1}\log(1/\texttt{tol})}$). 
\begin{figure}[tbp]
    \centering
    \includegraphics[width=\linewidth]{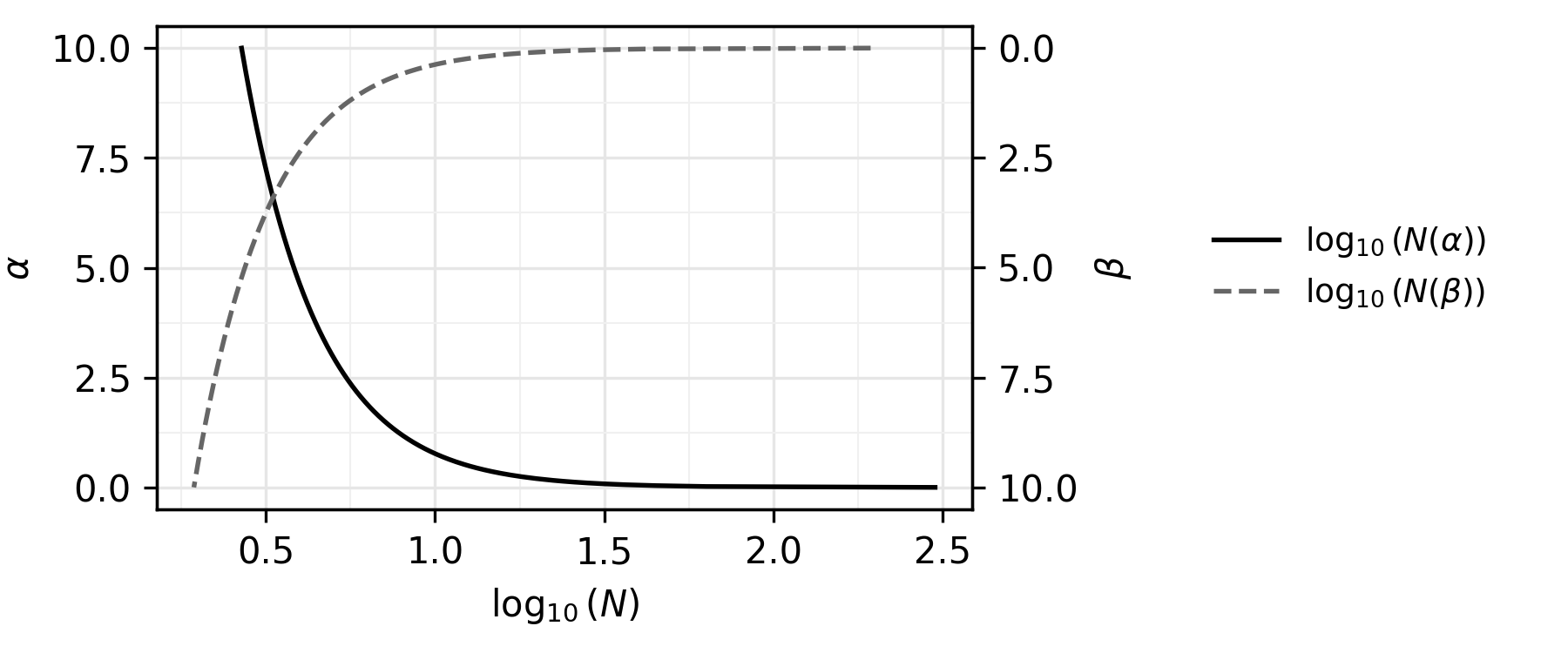}
    \caption{Continuous expressions underlying the displayed truncation conditions, shown on a $\log_{10}$ scale before integer rounding, for $\mathtt{tol}=10^{-16}$. The solid black curve corresponds to the $\alpha$ expression and the dashed grey curve to the $\beta$ expression. The $\beta$ axis is reversed for display.}
    \label{fig:tol}
\end{figure}
We note that a very modest $N$-value is sufficient to achieve machine precision, which we mean to be interpreted
as the IEEE 754 double precision: the unit round-off is $2^{-53}\approx 1.11\times 10^{-16}$. Smaller tolerances
cannot be meaningfully resolved in double precision arithmetic.

The acceptance probabilities of Algorithms~\ref{alg:interp:I}--\ref{alg:interp:IV} admit explicit
series representations. For fixed $a>0$, define the normalizing
constants
\begin{align*}
G_t(x)
&:= \int_0^a g(y;t,x,a)\,dy
 = \frac{4}{\pi}
   \sum_{\substack{n\ge1\\ n\text{ odd}}}
   \frac{\sin(n\pi x/a)}{n}e^{-\alpha_t n^2},\\
H_t
&:= \int_0^a h(y;t,a)\,dy
 = \frac{2\sqrt{2\pi}\,t^{3/2}}{a}
   \sum_{\substack{n\ge1\\ n\text{ odd}}}
   e^{-\alpha_t n^2}.
\end{align*}
Let $A_j$ denote the probability of accepting a proposal at the
outer rejection step of Algorithm $j$. Integrating each acceptance
test against its normalized proposal density gives
\begin{align*}
A_3
&= \frac{g(z;T,x,a)}{C\,G_t(x)},\\
A_4
&= \frac{(t/T)^{3/2}h(z;T,a)}{C\,H_t},\\
A_5
&= \frac{((T-t)/T)^{3/2}h(a-x;T,a)}{C\,H_{T-t}},\\
A_6
&= \frac{\pi^3[t(T-t)]^{3/2}}{a^3D\,H_t}
   \sum_{n=1}^{\infty}
   (-1)^{n+1}n^2e^{-\alpha_T n^2}.
\end{align*}
Here $C$ is the constant selected in the respective algorithm,
$D$ is the constant selected in Algorithm \ref{alg:interp:IV}, and
$\alpha_T=\pi^2T/(2a^2)$. These identities follow from the
series representations \eqref{eq:g2} and \eqref{eq:h2} by termwise integration
and orthogonality of the sine functions. They exclude the
internal rejections required to generate each proposal and
do not measure the cost of the series comparisons.

We end this section with the following observation and experiment. For clarity of exposition, Algorithms~\ref{alg:interp:I}--\ref{alg:interp:IV}
are presented in their basic forms, with a fixed choice of proposal
orientation. In our Python implementation, we additionally exploit the
time-reversal symmetry of the survival-conditioned Brownian bridge to
select the proposal associated with the shorter of the two time intervals,
$t$ and $T-t$. This entails interchanging the roles of the endpoints and,
where required by the boundary configuration, applying the spatial
reflection $y\mapsto a-y$, with accepted draws subsequently expressed in
the original coordinates. The remaining factor is evaluated in the outer
rejection step over the longer time interval, using the corresponding
envelope and series bounds. This deterministic choice is made before
generating proposals and preserves the target distribution. For a density
factor associated with an interval of length $s$, the implementation uses
the image bounds when $\alpha_s<\pi$ and the spectral bounds when
$\alpha_s\geq\pi$, where $\alpha_s=\pi^2s/(2a^2)$. In the spectral
regime, assigning the longer interval to the outer comparison promotes
faster decay of the higher modes after removing the common factor
$\exp\{-\alpha_s\}$. The proposal-orientation rule is distinct from this
choice of representation and is not intended to maximize the outer
acceptance probability. The displayed pseudocode and
associated acceptance-probability expressions refer to the basic proposal
orientations, whereas all bridge simulations and timing experiments reported
in this paper were conducted using the implementation with automatic
time reversal.

We complement these expressions with a Monte Carlo experiment and report
the empirical results in Table~\ref{tab:mc}. We fix $a=2$, $T=3$, and
$t\in\{0.1,1,1.5,2,2.5,2.9\}$. Each interior endpoint ranges over
$\{0.05,0.525,1,1.475,1.95\}$, giving $25$ endpoint settings for
Algorithm~\ref{alg:interp:I}, five each for
Algorithms~\ref{alg:interp:II} and~\ref{alg:interp:III}, and one for
Algorithm~\ref{alg:interp:IV}, at each value of $t$. For each setting,
we estimate the outer acceptance probability as $2,000$ divided by the
number of proposals required to obtain $2,000$ accepted draws. We then
time a separate batch of $10,000$ draws with proposal counting disabled.
The table reports the arithmetic mean of the acceptance estimates and
the median of the batch times across endpoint settings. For
Algorithm~\ref{alg:interp:IV}, each entry corresponds to its single
endpoint setting. All bridge simulations use automatic time reversal
and the image/spectral comparisons described above, and were run serially
in Python~3.9.6 on a laptop running macOS.

\begin{table}[!ht]
\centering
\caption{Mean outer acceptance rates and median elapsed times (seconds per $10,000$ draws) across endpoint settings for Algorithms \ref{alg:interp:I}--\ref{alg:interp:IV}, with automatic time reversal and image/spectral comparisons.}
\label{tab:mc}
\begin{tabular}{c|cc|cc|cc|cc}
\hline
& \multicolumn{2}{c|}{Algorithm \ref{alg:interp:I}} & \multicolumn{2}{c|}{Algorithm \ref{alg:interp:II}} & \multicolumn{2}{c|}{Algorithm \ref{alg:interp:III}} & \multicolumn{2}{c}{Algorithm \ref{alg:interp:IV}} \\
$t$ & Acc. & Med. time & Acc. & Med. time & Acc. & Med. time & Acc. & Med. time \\
\hline
0.1 & 0.113 & 1.064 & 0.089 & 1.596 & 0.437 & 0.350 & 0.363 & 0.489 \\
1.0 & 0.130 & 1.264 & 0.130 & 1.763 & 0.506 & 0.375 & 0.493 & 0.485 \\
1.5 & 0.129 & 2.483 & 0.128 & 1.591 & 0.128 & 1.599 & 0.495 & 0.477 \\
2.0 & 0.130 & 1.256 & 0.503 & 0.376 & 0.130 & 1.686 & 0.512 & 0.486 \\
2.5 & 0.128 & 1.201 & 0.492 & 0.315 & 0.129 & 1.265 & 0.488 & 0.322 \\
2.9 & 0.112 & 1.064 & 0.438 & 0.345 & 0.090 & 1.639 & 0.343 & 0.491 \\
\hline
\end{tabular}
\end{table}

The median batch times in Table~\ref{tab:mc} correspond to approximately
$0.031$--$0.248$ milliseconds per draw. The contrasting acceptance patterns
of Algorithms~\ref{alg:interp:II} and~\ref{alg:interp:III} primarily
reflect their changes of proposal orientation at $t=T/2$. For
Algorithm~\ref{alg:interp:II}, the implementation proposes from $h$
and tests against $g$ when $t\le T/2$, whereas it proposes from $g$
and tests against $h$ when $t>T/2$. Algorithm~\ref{alg:interp:III}
uses the $g$ proposal and a reflected $h$ factor when $t<T/2$, and
a reflected $h$ proposal with a $g$ test when $t\ge T/2$.
For the parameters considered here, the relatively conservative
constant envelope for $g$ contributes to the lower endpoint-averaged
acceptance rates in the $h$-proposal/$g$-test branches.
Thus, Algorithm~\ref{alg:interp:II} has
higher acceptance after the midpoint, while
Algorithm~\ref{alg:interp:III} has higher acceptance before it.
At $t=T/2$, both implementations retain the $h$ proposal and have
similar acceptance rates. Moreover, near a boundary endpoint, the
short-time $h$ proposal concentrates near that boundary, where the
remaining $g$ factor vanishes; this contributes to the lower acceptance
for Algorithm~\ref{alg:interp:II} at small $t$ and for
Algorithm~\ref{alg:interp:III} at $t$ close to $T$.
These observations describe the present parameter configuration and
do not assert monotonicity in $t$ or a uniformly preferable orientation
over other values of $a$, $T$, and the endpoints. Acceptance probabilities
also exclude the cost of generating proposals and evaluating the series
bounds, so they do not alone determine elapsed time.

Algorithms~\ref{alg:interp:I} and~\ref{alg:interp:IV} retain the same
proposal family on both sides of the midpoint: respectively, a $g$
proposal with a $g$ test, and an $h$ proposal with a reflected $h$ test.
The approximate symmetry of their reported acceptance rates about
$T/2$ reflects time reversal, together with averaging over the full
endpoint grid for Algorithm~\ref{alg:interp:I}. The acceptance rate
of Algorithm~\ref{alg:interp:I} remains comparatively stable over the
reported times. Algorithm~\ref{alg:interp:IV} has lower acceptance
near either endpoint, where the short-time $h$ proposal concentrates
near a boundary at which the remaining, reflected $h$ factor vanishes.

\section{Conclusion}
\label{sec:concl}

This work provides an exact way to extrapolate and interpolate Brownian paths restricted to a compact interval $[0,a]$. Standard techniques can be used to extend our algorithms to any other compact interval. At the core of our approach are the two complementary pictures of the probability density of interest. This leads us to two families of simple proposal distributions and one accept-reject engine built on monotone series bounds. We develop a set of algorithms which work across different regimes, cover interior and boundary endpoints in one sweep, and can be easily extended to construct skeleton paths via the Markov property. The dual representation allows us to develop algorithms with high acceptance rates across regimes.

The exact samplers derived here are variants of Devroye's series method. We work with absolutely convergent series, for which we develop tight remainder bounds, allowing us to construct monotone upper/lower partial sums. A straightforward computational analysis demonstrates the practicality of our approach. Machine-precision accuracy is attainable with modest truncation. Our algorithms are almost universally fast, and are built on principled automation. Rather than relying on heuristics, a scale-free summary of time versus interval width selects between the reflection and eigenfunction views, providing the tighter envelope.

From an applied perspective, the procedures are meant to be drop-in. Single time draws or skeleton paths are obtained by calling the same kernel, with boundary configurations handled automatically. The constants and arguments are selected to keep the implementation straightforward and portable, facilitating integration into larger Monte Carlo schemes.

In sum, our work shows that exactness need not come at the expense of practicality. By combining classical representations with carefully controlled series bounds the paper provides exact algorithms for extrapolation and interpolation under survival that are fast, robust and straightforward to implement. Our routines are intended to serve as reliable building blocks for simulation and inference tasks involving Brownian paths.

\begin{appendices}

\section{Jacobi Theta function identity}
\label{seec:jacobi}
For $y\in \RR$, $z>0$, the Jacobi theta identity \cite{NISTHandbook2010} states that 
    \begin{equation}
    \label{eq:jacobi1}
    \frac{1}{\sqrt{\pi z}}\sum_{k=-\infty}^{\infty}\exp\left\{-\frac{(k+y)^2}{z} \right\} \hspace{2mm}= \sum_{k=-\infty}^{\infty}\cos(2\pi ky)\exp\left\{-k^2\pi^2z\right\}.
    \end{equation}

\section{Proof of Proposition \ref{prop:markov}}
\label{proof:markov}

Let $D = (0,a)$ and $\tau = \inf\{s\ge 0\ :\ W_s \not\in D\}$ be the exit time from $D$. We use $\PP_x, \EE_x$ to denote the probability measure and expected value respectively as induced by $W^x$. Let $p^D_t(u,v)$ be the transition density of the Brownian motion killed on exiting $D$ \cite{borodin2012handbook,ito2012diffusion,revuz2013continuous}, that is
$$
p_t^D(u,v)dv = \PP_u(W_t\in dv,\ \tau >t)\ .
$$
We also define the post-$t$ exit time $\tau^{(t)} = \inf\{s\ge 0\ :\ W_{t+s}\not\in D\}$, and we note that for $0<t<T$, we have
$$
\bfone\{\tau > T\} = \bfone\{\tau>t\}\bfone\{\tau^{(t)} > T-t\}\ .
$$ 
For two measurable functions $f,g$ we have
\begin{align*}
\EE_x\left[f(W_t)g(W_T)\bfone\{\tau > T\}\right] &= 
\EE_x\left[ f(W_t) g(W_T)\bfone\{\tau>t\}\bfone\{\tau^{(t)} > T-t\}\right] \\
&= \EE_x\left[f(W_t)\bfone\{\tau>t\}\EE_x\left[g(W_T)\bfone\{\tau^{(t)}>T-t\}  \mid W_t\right]\right]
\end{align*}
Markov property gives
$$
\EE_x\left[g(W_T)\bfone\{\tau^{(t)}>T-t\}  \mid W_t=y\right] = \EE_y\left[g(W_{T-t})\bfone\{\tau > T-t\}\right]
$$
For two Borel sets $B_1, B_2\subset (0,a)$, take $f = \bfone\{y\in B_1\}$, $g = \bfone\{z\in B_2\}$. It follows that
$$
\PP_x(W_t\in B_1, W_T\in B_2, \tau > T) = \int_{B_1}\int_{B_2} p_t^D(x,y)p_{T-t}^D(y,z)\,dydz
$$
or, equivalently
$$
\PP_x(W_t\in dy, W_T\in dz, \tau > T) =  p_t^D(x,y)p_{T-t}^D(y,z)\,dydz
$$
Divide by $\PP_x(W_T\in dz, \tau > T) = p_T^D(x,z)dz$  and use the fact that $p_t^D(u,v) = p_t^D(v,u)$ to obtain
$$
\PP(W^{x\to z}_t\in dy \mid B) = \frac{p_t^D(x,y)p^D_{T-t}(z,y)}{p_T^D(x,z)}dy
$$
Finally, note that
$$
\PP(W_t^x \in dy\mid A_x(t)) = \frac{p_t^D(x,y)}{\PP_x(\tau >t)}dy
\quad \mbox{and} \quad
\PP(W_{T-t}^z \in dy\mid A(T-t)) = \frac{p_{T-t}^D(z,y)}{\PP_z(\tau >T-t)}dy
$$
which gives the desired relation.

\section{Proof of Theorem \ref{thm:bound:h}}
\label{proof:bound:h}

We prove the first inequality using the expansion given in \eqref{eq:h1}. Note that, since the series defining $h$ converge absolutely due to the exponential damping term, we can write
\begin{align*}
h(y;t,a) =\sum_{n=-\infty}^{\infty}y\exp\left\{\frac{-(y+2na)^2}{2t}\right\}
+
2a\sum_{n=-\infty}^{\infty}n\exp\left\{\frac{-(y+2na)^2}{2t}\right\}
\end{align*}
Since $y>0$, the first term above is positive. Write the second term as
$$
\sum_{n=-\infty}^{\infty}
n\exp\left\{\frac{-(y+2na)^2}{2t}\right\}
=
\sum_{n=1}^{\infty}n\left[\exp\left\{\frac{-(2na+y)^2}{2t}\right\}-\exp\left\{\frac{-(2na-y)^2}{2t}\right\}\right]
$$
Since $y\in (0,a)$, for $n\ge 1$ we have
$$
(2n+1)a >2na+y > 2na > 2na-y > (2n-1)a \ge  a>0,
$$
Thus
$$
\exp\left\{\frac{-(2na+y)^2}{2t}\right\}-\exp\left\{\frac{-(2na-y)^2}{2t}\right\} < 0
$$
We conclude that
\begin{align*}
h(y;t,a) & \le \sum_{n=-\infty}^{\infty}y\exp\left\{\frac{-(y+2na)^2}{2t}\right\} \\
&= y\exp\left\{\frac{-y^2}{2t}\right\} + y \sum_{n=1}^{\infty}\left[\exp\left\{\frac{-(2na+y)^2}{2t}\right\}+\exp\left\{\frac{-(2na-y)^2}{2t}\right\}\right]
\end{align*}
Now
$$
\begin{aligned}
\exp\left\{\frac{-(2na+y)^2}{2t}\right\}
&< \exp\left\{\frac{-(2na)^2}{2t}\right\},\\
\exp\left\{\frac{-(2na-y)^2}{2t}\right\}
&< \exp\left\{\frac{-((2n-1)a)^2}{2t}\right\}.
\end{aligned}
$$
Thus,
\begin{align*}
    h(y;t,a) &\le y\exp\left\{\frac{-y^2}{2t}\right\} +y
    \sum_{n=1}^{\infty} \left[\exp\left\{\frac{-(2na)^2}{2t}\right\}+\exp\left\{\frac{-((2n-1)a)^2}{2t}\right\}\right] \\
    &=  y\exp\left\{\frac{-y^2}{2t}\right\} +y \sum_{n=1}^{\infty}\exp\left\{\frac{-n^2a^2}{2t}\right\}.
\end{align*}
For the second inequality, we use the sin expansion \eqref{eq:h2}. Using the fact that:
$$
\left|\sin\left(\frac{n\pi y}{a}\right)\right|  = \left|\sin\left(\frac{n\pi (a-y)}{a}\right)\right| \le \frac{n\pi}{a}\min\{y, a-y\}
$$
we obtain
\begin{align*}
h(y;t,a)&\le \frac{\sqrt{2}\pi^{5/2} t^{3/2}}{a^3}\sum_{n=1}^\infty n^2\exp\{-\alpha_t n^2\} \times \min\{ y,a-y\} \\
& = 4\pi^{-1/2}\alpha_t^{3/2} \sum_{n=1}^\infty n^2\exp\{-\alpha_t n^2\} \times \min\{ y,a-y\}\ .
\end{align*}

\section{Proof of Proposition \ref{prop:h1}}
\label{proof:h1}

We give a complete proof for the bounds and the properties related to $S^{(1)}$. This proof can be reproduced line-by-line to establish the bounds and approximating sequences for $S^{(2)}$. We note the following equivalences
\begin{align*}
&\frac{(n+1)^2\exp\{-\alpha(n+1)^2\}}{n^2\exp\{-\alpha n^2\}} < \rho \qquad \Leftrightarrow \\
&\frac{1}{\rho}\frac{(n+1)^2}{n^2} < \frac{\exp\{\alpha(n+1)^2\} }{\exp\{\alpha n^2\}} = \exp\{\alpha(2n+1)\} \qquad \Leftrightarrow\\
&\log(1/\rho) + 2\log(1+1/n) < \alpha(2n+1)
\end{align*}
The left-hand side term is upper-bounded by
$$
\log(1/\rho) + 2\log(1+1/n) \le \log(4/\rho)
$$
and finally
$$
\log(4/\rho) < \alpha(2n+1)
\quad
\Leftrightarrow\quad
n> \frac{1}{2}\left[\frac{1}{\alpha}\log\left(\frac{4}{\rho}\right)-1\right]\stackrel{\rm def}{=} N_0(\alpha, \rho)
$$
Thus, if $N > N_0(\alpha,\rho)$ we have
$$
R^{(1)}_N < (N+1)^2\exp\{-\alpha(N+1)^2\}\sum_{n\ge 0}\rho^n = (N+1)^2\exp\{-\alpha(N+1)^2\}/(1-\rho)\ .
$$
This proves the first part of the proposition. For the second part, note that the terms in the series defining $S^{(1)}(\alpha)$ are positive, thus $S^{(1)}_N(\alpha)\nearrow S^{(1)}(\alpha)$ as $N\to \infty$. Since $x^{(1)}_N(\alpha)\to 0$, it also follows that $S^{(1)}_N(\alpha) + x^{(1)}_N(\alpha)\to S$ as $N\to \infty$. It remains to show that $S^{(1)}_N(\alpha) + x^{(1)}_N(\alpha)$ is a decreasing sequence. For this, note that
\begin{align*}
   S^{(1)}_{N+1}(\alpha) + x^{(1)}_{N+1}(\alpha) &\le S^{(1)}_N(\alpha) + x^{(1)}_N(\alpha) \qquad \Leftrightarrow \\
 S^{(1)}_{N+1}(\alpha) - S^{(1)}_N(\alpha) &\le x^{(1)}_N(\alpha) - x^{(1)}_{N+1}(\alpha)\qquad \Leftrightarrow \\
 (1-\rho)x^{(1)}_N(\alpha) &\le x^{(1)}_N - x^{(1)}_{N+1}(\alpha) \qquad \Leftrightarrow \\
x^{(1)}_{N+1}(\alpha) &\le \rho x^{(1)}_N(\alpha)\ ,
\end{align*}
which holds for $N\ge N_0(\alpha, \rho)$ as demonstrated above. 

Analyzing the second series in a similar way yields that the desired inequalities hold for 
$$
N\ge N_0(\beta,\rho) = \frac{1}{2}\left[\frac{1}{\beta}\log\left(\frac{1}{\rho}\right)-1\right]\ .
$$
Combining these two cases into one, gives the desired result.

\section{Proof of Proposition \ref{prop:g2} and Proposition \ref{prop:h2}}
\label{proof:h2}

We begin by providing a complete proof of Proposition \ref{prop:h2}. At the end of this section we discuss a few minor modifications needed for Proposition \ref{prop:g2}. As established in the main text, $|R^h_N(y)|\le x^h_N(y)$, and thus
\begin{align*}
S^h_N(y) + x^h_N(y) &\ge S_N^h(y) + R^h_N(y) = h(y;t,a)
\qquad\mbox{ and }\\
S^h_N(y) -x^h_N(y) &\le S^h_N(y) + R^h_N(y) = h(y;t,a)
\end{align*}
Next we establish the monotonicity of the two upper and lower bounding sequences. We drop the dependence on $y$, to simplify the notation. Note the following equivalence: 
\begin{align*}
    S_N^h - x_N^h &\le S_{N+1}^h - x^h_{N+1}\qquad \Leftrightarrow\\
    x_{N+1}^h - x_N^{h} &\le K^h (N+1)\sin\left(\frac{\pi(N+1)y}{a}\right)
    \exp\left\{-\frac{\pi^2(N+1)^2t}{2a^2}\right\} \ .
\end{align*}
Now, for $N > N_0(\alpha_t, \beta_t, \rho)$ as in Proposition \ref{prop:h1}, we have 
$$
 x^h_{N+1} \le \rho x^h_{N}\ .
$$
Thus, using $-x\le \sin(x)$ for $x\ge 0$, we have
\begin{align*}
x_{N+1}^h - x_N^h &\le -(1-\rho) x_{N}^h = -K^h \frac{\pi y}{a} (N+1)^2 \exp\left\{-\frac{\pi^2(N+1)^2t}{2a^2} \right\} \\
&\le K^h (N+1) \sin\left(\frac{\pi(N+1)y}{a}\right)\exp\left\{-\frac{\pi^2(N+1)^2t}{2a^2}\right\}
\end{align*}
as required. Conclude that $S_N^h - x_N^h$ is eventually increasing. Similarly, we have the following equivalence: 
\begin{align*}
    S_N^h + x_N^h &\ge S_{N+1}^h + x^h_{N+1}\qquad \Leftrightarrow\\
    -x_{N+1}^h + x_N^{h} &\ge K^h (N+1)\sin\left(\frac{\pi(N+1)y}{a}\right)
    \exp\left\{-\frac{\pi^2(N+1)^2t}{2a^2}\right\} 
\end{align*}
Again, for large $N$ we have $x_{N+1}^h\le \rho x_N^h$ and thus, using $\sin(x) \le x$ for $x\ge 0$, we have
\begin{align*}
-x_{N+1}^h + x_N^h &\ge (1-\rho) x_{N}^h = K^h \frac{\pi y}{a} (N+1)^2 \exp\left\{-\frac{\pi^2(N+1)^2t}{2a^2} \right\} \\
&\ge K^h (N+1) \sin\left(\frac{\pi(N+1)y}{a}\right)\exp\left\{-\frac{\pi^2(N+1)^2t}{2a^2}\right\}\ ,
\end{align*}
as required. Finally, since $x_N^h(y)\to 0$ as $N\to \infty$, it follows that
$$
S_N^h(y) + x_N^h(y)\searrow h(y;t,a) 
\qquad \mbox{ and }\qquad 
S_N^h(y) - x_N^h(y)\nearrow h(y;t,a)\ .
$$
as $N\to \infty$.

The proof of Proposition \ref{prop:g2} can be reproduced from above, line by line. In the monotonicity argument, the following inequality is used instead:
$$
\forall\ x,y\ge 0 \qquad |\sin(x)\sin(y)| \le xy
\quad\Leftrightarrow\quad -xy \le \sin(x)\sin(y)\le xy\ .
$$
This inequality follows directly from 
$
|\sin(x)\sin(y)| = |\sin(x)|\ |\sin(y)| \le xy\ .
$

\section{Proofs of Propositions~\ref{prop:image-g}
and~\ref{prop:image-h}}
\label{app:image-bounds}

Both image series converge absolutely and uniformly in $y\in[0,a]$.
Indeed, for $|n|\geq2$,
$|y+2na\pm x|\geq a|n|$ and $|2na+y|\geq a|n|$,
whereas $|2na+y|\leq a(2|n|+1)$. Thus, apart from constant
factors, the absolute values of their summands are bounded by
$(1+|n|)\exp\{-\beta_tn^2\}$, a summable sequence.
Pairing the terms with indices $n$ and $-n$ is therefore legitimate,
including at $y=0$ and $y=a$.

\begin{proof}[Proof of Proposition~\ref{prop:image-g}]
Fix $a,t>0$, $0<x<a$, and $y\in[0,a]$. For $n\geq1$, put
\[
\Delta_n^g(y)=\widetilde S_n^g(y)-\widetilde S_{n-1}^g(y),
\qquad d_n=2na-x-y>0,
\qquad F(u)=\exp\{-u^2/(2t)\},\quad u\geq0.
\]
Pairing the two new image terms gives
\begin{align*}
\sqrt{2\pi t}\,\Delta_n^g(y)
&=F(d_n+2y)-F(d_n+2x+2y)
  -F(d_n)+F(d_n+2x)\\
&=\bigl[F(d_n+2y)-F(d_n+2x+2y)\bigr]
  -\bigl[F(d_n)-F(d_n+2x)\bigr].
\end{align*}
Since $F$ is nonnegative and decreasing on $[0,\infty)$,
both bracketed expressions belong to $[0,F(d_n)]$. Consequently,
\begin{equation}
|\Delta_n^g(y)|\leq b_n,
\qquad\mbox{ where we set }\quad
b_n=\frac{\exp\{-d_n^2/(2t)\}}{\sqrt{2\pi t}}.
\label{eq:image-g-increment}
\end{equation}
The consecutive ratios satisfy
\[
r_n:=\frac{b_{n+1}}{b_n}
=\exp\left\{-\frac{2a[(2n+1)a-x-y]}{t}\right\},
\qquad 0<r_{n+1}<r_n<1.
\]
In particular,
$\widetilde x_N^g(y)=b_{N+1}/(1-r_{N+1})$.
Absolute convergence, \eqref{eq:image-g-increment}, and the decrease
of $r_n$ imply
\begin{align*}
|\widetilde R_N^g(y)|
&\leq\sum_{j=0}^{\infty}b_{N+1+j}
\leq b_{N+1}\sum_{j=0}^{\infty}r_{N+1}^{\,j}
=\frac{b_{N+1}}{1-r_{N+1}}
=\widetilde x_N^g(y).
\end{align*}
This proves the two bounds in part~1. Next, using $r_{N+2}\leq r_{N+1}$, we obtain
\[
\widetilde x_{N+1}^g(y)
=\frac{r_{N+1}b_{N+1}}{1-r_{N+2}}
\leq r_{N+1}\widetilde x_N^g(y).
\]
Therefore,
\begin{equation}
\widetilde x_N^g(y)-\widetilde x_{N+1}^g(y)
\geq (1-r_{N+1})\widetilde x_N^g(y)
=b_{N+1}
\geq|\Delta_{N+1}^g(y)|.
\label{eq:image-g-decrement}
\end{equation}
It follows that
\begin{align*}
&[\widetilde S_{N+1}^g(y)-\widetilde x_{N+1}^g(y)]
-[\widetilde S_N^g(y)-\widetilde x_N^g(y)]
\geq\Delta_{N+1}^g(y)+|\Delta_{N+1}^g(y)|\geq0,\\
&[\widetilde S_{N+1}^g(y)+\widetilde x_{N+1}^g(y)]
-[\widetilde S_N^g(y)+\widetilde x_N^g(y)]
\leq\Delta_{N+1}^g(y)-|\Delta_{N+1}^g(y)|\leq0.
\end{align*}
Finally, $b_{N+1}\to0$ and $r_{N+1}\to0$, so that
$\widetilde x_N^g(y)\to0$ as $N\to \infty$. Together with part~1, this establishes
the two monotone limits.
\end{proof}

\begin{proof}[Proof of Proposition~\ref{prop:image-h}]
Fix $a,t>0$ and $y\in[0,a]$, and define
\[
q(u)=u\exp\{-u^2/(2t)\},\qquad u\geq0.
\]
and note that $q$ is nonnegative and decreasing on
$[\sqrt t,\infty)$. Pairing the terms with indices $n$ and $-n$
in the image series gives
\[
\widetilde S_N^h(y)
=q(y)+\sum_{n=1}^{N}\bigl[q(2na+y)-q(2na-y)\bigr].
\]
Let $N\geq\widetilde N_0(\beta_t)$. By
\eqref{eq:image-cutoff},
\[
(2N+1)a\geq\sqrt t.
\]
Hence, for every $n\geq N+1$, all the following arguments are
in $[\sqrt t,\infty)$, and $0\leq y\leq a$ gives
\[
2na-y\leq2na+y\leq2(n+1)a-y.
\]
The monotonicity of $q$ therefore yields
\begin{equation}
0\leq q(2na-y)-q(2na+y)
\leq q(2na-y)-q(2(n+1)a-y).
\label{eq:image-h-telescoping}
\end{equation}
For $M>N$, summing these inequalities gives
\begin{align*}
0\leq\widetilde S_N^h(y)-\widetilde S_M^h(y)
&=\sum_{n=N+1}^{M}
  \bigl[q(2na-y)-q(2na+y)\bigr]\\
&\leq q(2(N+1)a-y)-q(2(M+1)a-y).
\end{align*}
Letting $M\to\infty$, and using absolute convergence and
$q(u)\to0$ as $u\to\infty$, we obtain
\begin{equation}
0\leq\widetilde S_N^h(y)-h(y;t,a)
\leq q(2(N+1)a-y)
=\widetilde x_N^h(y).
\label{eq:image-h-remainder}
\end{equation}
This proves part~1 and the sharper upper bound. To establish monotonicity, write
$\Delta_{N+1}^h(y)=\widetilde S_{N+1}^h(y)-\widetilde S_N^h(y)$.
Taking $n=N+1$ in \eqref{eq:image-h-telescoping} shows that
$\Delta_{N+1}^h(y)\leq0$ and
\[
\widetilde x_N^h(y)-\widetilde x_{N+1}^h(y)
\geq|\Delta_{N+1}^h(y)|.
\]
Consequently,
\begin{align*}
&[\widetilde S_{N+1}^h(y)-\widetilde x_{N+1}^h(y)]
-[\widetilde S_N^h(y)-\widetilde x_N^h(y)]
\geq\Delta_{N+1}^h(y)+|\Delta_{N+1}^h(y)|=0,\\
&[\widetilde S_{N+1}^h(y)+\widetilde x_{N+1}^h(y)]
-[\widetilde S_N^h(y)+\widetilde x_N^h(y)]
\leq\Delta_{N+1}^h(y)-|\Delta_{N+1}^h(y)|\leq0.
\end{align*}
Finally, $\widetilde x_N^h(y)\to0$, so both bounding sequences
converge to $h(y;t,a)$ by \eqref{eq:image-h-remainder}.
The inequality $\Delta_{N+1}^h(y)\leq0$ also proves
$\widetilde S_N^h(y)\searrow h(y;t,a)$.
All arguments remain valid at $y=0$ and $y=a$.
\end{proof}

\section{Proof of Lemma \ref{lemma:acc:I}}
\label{proof:lemma:acc:I}
Note that the numerator in the definition of $A_1(\beta_t)$ is the distribution function of the Kolmogorov-Smirnov distribution evaluated at $\sqrt{\beta_t/2}$, thus it is increasing in $\beta_t$ \cite{Kolmogorov1933,devroye1981series}. Differentiate the denominator:
\begin{align*}
    \frac{\partial}{\partial \beta_t} &  \Big[\left(1-\exp\left\{-\beta_t \right\}\right) + \beta_t \sum_{n=1}^\infty \exp\left\{-\beta_t n^2\right\}\Big] \\
    &= \exp\{-\beta_t\} + \sum_{n=1}^\infty \exp\left\{-\beta_t n^2\right\} -\beta_t\sum_{n=1}^\infty n^2\exp\left\{-\beta_t n^2\right\} \\
    &= (2-\beta_t)\exp\{-\beta_t\} + \sum_{n=2}^\infty (1-\beta_t n^2) \exp\{-\beta_t n^2\} < 0 \\
&\hspace{15mm}\mbox{ for }\quad \beta_t > \beta_0 = 2.271664
\end{align*}
Thus the denominator is decreasing in $\beta_t$, hence the ratio $A_1(\beta_t)$ is increasing on $(\beta_0, \infty)$.

\section{Proof of Theorem \ref{thm:acc:II}}
\label{proof:thm:acc:II}

Before proving Theorem \ref{thm:acc:II} we state and prove a short lemma.
\begin{lemma}
\label{lem:short}
Consider a random variable $X$ and measurable functions $f,g:\RR\to \RR$ such that $f(X)$, $g(X)$, and $f(X)g(X)$ are integrable and
\begin{itemize}
    \item $g$ is non-decreasing;
    \item If $m=\EE(f(X))$, there exists $c$ in the support of $X$ such that
    $$
    f(x)\ge m\quad \mbox{ if }\ x\le c,\qquad
    f(x)\le m \quad \mbox{ if }\ x >c\ .
    $$
\end{itemize}
Then, $\Cov(f(X), g(X)) \le 0$.
\end{lemma}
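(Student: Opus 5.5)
The plan is to reduce the covariance to an expectation that can be signed pointwise, by centering $f$ and comparing $g$ with its value at the threshold $c$. First I observe that
\[
\Cov(f(X),g(X)) = \EE\big[(f(X)-m)\,g(X)\big],
\]
since $\EE[(f(X)-m)]\,\EE[g(X)]=0$. All quantities here are well defined by the integrability assumptions on $f(X)$, $g(X)$ and $f(X)g(X)$. The same reasoning shows that for any constant $g(c)$ we have $\EE[(f(X)-m)\,g(c)]=0$. Here $c$ is the point supplied by the hypothesis, and $g(c)$ is a finite real number since $g:\RR\to\RR$. Subtracting this zero term gives
\[
\Cov(f(X),g(X)) = \EE\big[(f(X)-m)\,(g(X)-g(c))\big].
\]

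The key step is to show that the integrand $(f(x)-m)(g(x)-g(c))$ is non-positive for every $x$, and I split into two cases according to the position of $x$ relative to $c$.

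\emph{Case $x\le c$.} The hypothesis gives $f(x)-m\ge 0$, and since $g$ is non-decreasing, $g(x)-g(c)\le 0$. The product is therefore $\le 0$.

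\emph{Case $x>c$.} Now $f(x)-m\le 0$ while $g(x)-g(c)\ge 0$, so the product is again $\le 0$.

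Since the integrand is pointwise non-positive, its expectation is $\le 0$, which is the claim. That $c$ lies in the support of $X$ is not actually needed for this argument; any real $c$ with the stated sign-change property works.

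The only delicate point, which I regard as the mild obstacle, is justifying that $\EE[(f(X)-m)(g(X)-g(c))]$ is well defined and can be split linearly. Expanding the product gives $f(X)g(X) - g(c)f(X) - m\,g(X) + m\,g(c)$. Each of these terms is integrable by the assumptions, so linearity of expectation applies and the rearrangement above is legitimate.
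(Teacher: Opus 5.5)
Your proof is correct and follows the same route as the paper: you rewrite the covariance as $\EE[(f(X)-m)(g(X)-g(c))]$ and note that the integrand is non-positive on both $\{X\le c\}$ and $\{X>c\}$. Your explicit check that each expanded term is integrable, and your remark that $c$ need not lie in the support of $X$, are small refinements that the paper leaves implicit.
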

\begin{proof}
Indeed, 
$$
\Cov(f(X), g(X)) = \EE[ (f(X)-m)(g(X) - \EE(g(X)))] = \EE[ (f(X) - m)(g(X) - g(c))]
$$
On the event ${X\le C}$ the first term is non-negative,the second term is non-positive, thus the product is non-positive. On the event $[X>c]$ the first term is non-positive, the second term is non-negative, thus the product is always non-positive, hence $\Cov(f(X), g(X))\le 0$.
\end{proof}
Returning to the proof of the theorem, we start with the Jacobi function identity \eqref{eq:jacobi1}, where we set $y=1/2$ and $\pi^2 z = \beta_t$, and find
$$
\begin{aligned}
1 + 2\sum_{n=1}^\infty (-1)^n\exp\{-\beta_t n^2\}
&= \sqrt{\frac{\pi}{\beta_t}} \sum_{n=-\infty}^\infty \exp\left\{ - \frac{\pi^2(2n+1)^2}{4\beta_t}\right\}\\
&= 2 \sqrt{\frac{\pi}{\beta_t}}\sum_{\substack{n=1\\ n\ \mathrm{odd}}}^\infty \exp\left\{-\frac{\pi^2 n^2}{4\beta_t}\right\}\ .
\end{aligned}
$$
As such, the acceptance ratio becomes
\begin{align*}
A_2(\beta_t) &= \frac
{\displaystyle 2 \sqrt{\frac{\pi}{\beta_t}}\sum_{\substack{n=1\\ n\ \mathrm{odd}}}^\infty \exp\left\{-\frac{\pi^2 n^2}{4\beta_t}\right\}}
{\displaystyle 4^{-1}\pi^{5/2}\beta_t^{-1/2}\sum_{n=1}^\infty n^2\exp\left\{-\frac{n^2\pi^2}{4\beta_t}\right\}}
=
\frac{8}{\pi^2}
\cdot
\frac{\displaystyle \sum_{\substack{n=1\\ n\ \mathrm{odd}}}^\infty \exp\left\{-\frac{\pi^2 n^2}{4\beta_t}\right\}}
{\displaystyle \sum_{n=1}^\infty n^2\exp\left\{-\frac{n^2\pi^2}{4\beta_t}\right\}}\\
&= \frac{8}{\pi^2}\cdot
\frac{\displaystyle \sum_{\substack{n=1\\ n\ \mathrm{odd}}}^\infty \exp\{-\alpha_t n^2\} }
{\displaystyle \sum_{n=1}^\infty n^2\exp\{-\alpha_t n^2\}}\ ,
\end{align*}
 where we recall that $\alpha_t = \pi^2/(4\beta_t)$.
Define
\[
N(\alpha):=\sum_{\substack{n\ge 1\\ n\ \mathrm{odd}}} e^{-\alpha n^2},\qquad
D(\alpha):=\sum_{n=1}^\infty n^2 e^{-\alpha n^2},\qquad
\Lambda(\alpha):=\frac{N(\alpha)}{D(\alpha)}.
\]
We drop the dependence on $t$, to simplify notation. We see that 
$
A_2(\beta_t)=\frac{8}{\pi^2}\,\Lambda\!\left(\frac{\pi^2}{4\beta_t}\right)
$, $\beta_t>0$.
Consider a discrete random variable $X\in \{1,2,\ldots\}$ with distribution
$$
\PP(X = n) = \frac{n^2\exp\{-\alpha n^2\}}{D(\alpha)} \qquad n\ge 1\ .
$$
Define $\phi(n):=\mathbf 1_{\{n\ \mathrm{odd}\}}/n^2$. Then
$$
\displaystyle 
\Lambda(\alpha)=\frac{N(\alpha)}{D(\alpha)}=\sum_{n\ge 1}\frac{\mathbf 1_{\{n\ \mathrm{odd}\}}}{n^2}\,\frac{n^2 \exp\{-\alpha n^2\} }{D(\alpha)}
=\mathbb E[\phi(X)]\ .
$$
We claim
\begin{equation}\label{eq:cov-identity}
\Lambda'(\alpha) = \frac{\partial}{\partial \alpha}\Lambda(\alpha)=-\operatorname{Cov}\!\big(\phi(X),X^2\big).
\end{equation}
Indeed, for $\alpha\ge \alpha_0>0$ the series $N(\alpha), D(\alpha)$ converge absolutely, so  we may differentiate termwise:
\[
\frac{\partial}{\partial \alpha}N(\alpha)= -\sum_{n\ge 1}\phi(n)\,n^2\cdot n^2 \exp\{-\alpha n^2\},\qquad
\frac{\partial}{\partial \alpha}D(\alpha)= -\sum_{n\ge 1}n^4 \exp\{-\alpha n^2\}.
\]
The quotient rule gives
\[
\Lambda'(\alpha)=\frac{N'(\alpha)D(\alpha)-N(\alpha)D'(\alpha)}{D(\alpha)^2}
=-\,\mathbb E\big[\phi(X)X^2\big]
+\mathbb E[\phi(X)]\,\mathbb E[X^2],
\]
which is \eqref{eq:cov-identity}. It remains to show that 
$$
\Cov(\phi(X), X^2) \le 0\ .
$$
For this, we use the lemma above with $f(n) = \phi(n)$, $g(n) = n^2$, and $c=1$. It is clear that $f(X)$ and $g(X)$ are integrable, $g$ is non-decreasing and 
$$
m_\alpha = \EE(\phi(X)) = \frac{N(\alpha)}{D(\alpha)} = \Lambda(\alpha) < \infty\ .
$$
Note that $\phi(1) = 1$ and $\phi(n) \le 1/9$ for $n\ge 2$. It suffices to prove that 
$$
\frac{1}{9} < m_\alpha < 1\ .
$$
The upper bound is immediate because $\phi(1) = 1$, $\phi(n) < 1$ for $n\ge 2$ and $\PP(X\ge 2)>0$. For the lower bound, note that $N(\alpha) > e^{-\alpha}$ and thus
$$
m_\alpha \ge \frac{e^{-\alpha}}{D(\alpha)}
$$
For $\alpha \ge 1$,
\begin{align*}
\frac{D(\alpha)}{e^{-\alpha}} &= 1 + \sum_{n=2}^\infty n^2e^{-\alpha(n^2-1)} 
 <  1 + \sum_{n=2}^\infty n^2e^{-n}\\
&<  1 + \sum_{n=2}^\infty n^22^{-n} = 1 + \left(6-\frac{1}{2}\right) = \frac{13}{2}\ .
\end{align*}  
Thus, when $\alpha \ge 1$
$$
m_\alpha \ge \frac{e^{-\alpha}}{D(\alpha)} > \frac{2}{13} > \frac{1}{9}\ ,
$$
as desired. Thus, $\Lambda$ is increasing in $\alpha_t$ and, equivalently, decreasing in $\beta_t$, over $\alpha_t\ge 1$, and thus, including when $\alpha_t\ge\alpha_0>1$, which is equivalent with $\beta_t \le \beta_0$.
\end{appendices}

\backmatter
\section*{Statements and Declarations}
\subsection*{Funding}
Nothing to declare.

\subsection*{Competing interests}
The authors declare that they have no conflict of interest.

\subsection*{Author contributions}
Both authors contributed equally to the conception of the study and the theoretical analysis. Radu Herbei drafted the manuscript and performed the computer simulations. Both authors contributed equally to the final editing of the manuscript.

\subsection*{Data availability}
The simulation data supporting the figures can be regenerated using the Python scripts in the project repository. The repository also provides the code for reproducing the acceptance-rate and timing experiment.

\subsection*{Code availability}
Python code implementing the sampling algorithms and reproducing the figures and computational experiment is available at~\url{https://github.com/herbei/Brownian_Extrapolation_Interpolation}.

\subsection*{Use of AI tools}
ChatGPT was used to assist with debugging Python code and improving grammar and language.

\bibliography{references}
\end{document}